\newtheorem{prop}{Proposition}
\newtheorem{thm}[prop]{Theorem}
\newtheorem{lem}[prop]{Lemma}
\theoremstyle{definition}
\newtheorem*{defn}{Definition}
\newtheorem{rem}[prop]{Remark}
\newtheorem*{ack}{Acknowledgements}
\def\co{\colon\thinspace}
\newcommand{\halp}{\hat{\alpha}}
\newcommand{\wB}{\widehat{B}}
\newcommand{\BB}{\mathcal{B}}
\newcommand{\C}{\mathbb{C}}
\newcommand{\D}{\mathbb{D}}
\newcommand{\rmd}{\mathrm{d}}
\newcommand{\rme}{\mathrm{e}}
\newcommand{\EE}{\mathcal{E}}
\newcommand{\F}{\mathbb{F}}
\newcommand{\Finj}{F_{\mathrm{inj}}}
\newcommand{\oG}{\overline{G}}
\newcommand{\bfh}{\mathbf{h}}
\newcommand{\rmi}{\mathrm{i}}
\newcommand{\N}{\mathbb{N}}
\newcommand{\OO}{\mathbb{O}}
\newcommand{\R}{\mathbb{R}}
\newcommand{\wR}{\widehat{\R}^{2n+1}}
\newcommand{\bfs}{\mathbf{s}}
\newcommand{\bft}{\mathbf{t}}
\newcommand{\bfv}{\mathbf{v}}
\newcommand{\utsb}{u^{\bft}_{\bfs,b}}
\newcommand{\fraku}{\mathfrak{u}}
\newcommand{\frakv}{\mathfrak{v}}
\newcommand{\frakV}{\mathfrak{V}_u}
\newcommand{\ofrakv}{\overline{\mathfrak{v}}}
\newcommand{\WW}{\mathcal{W}}
\newcommand{\wWW}{\widetilde{\mathcal{W}}}
\newcommand{\bfz}{\mathbf{z}}
\newcommand{\wZ}{\widehat{Z}}
\newcommand{\acyl}{\alpha_{\mathrm{cyl}}}
\newcommand{\loc}{\mathrm{loc}}
\DeclareMathOperator{\coker}{\mathrm{coker}}
\DeclareMathOperator{\ev}{ev}
\DeclareMathOperator{\id}{id}
\DeclareMathOperator{\im}{Im}
\DeclareMathOperator{\Int}{Int}
\DeclareMathOperator{\Index}{index}
\DeclareMathOperator{\re}{Re}
\begin{document}

\author[H.~Geiges]{Hansj\"org Geiges}
\address{Mathematisches Institut, Universit\"at zu K\"oln,
Weyertal 86--90, 50931 K\"oln, Germany}
\email{geiges@math.uni-koeln.de}
\author[K.~Zehmisch]{Kai Zehmisch}
\address{Mathematisches Institut, WWU M\"unster,
Einstein\-stra\-{\ss}e 62, 48149 M\"unster, Germany}
\email{kai.zehmisch@uni-muenster.de}

\title{Reeb dynamics detects odd balls}

\date{}

\begin{abstract}
We give a dynamical characterisation of odd-dimensional
balls within the class of all contact manifolds
whose boundary is a standard even-dimensional sphere.
The characterisation is in terms of the non-existence
of short periodic Reeb orbits.
\end{abstract}

\subjclass[2010]{53D35; 37C27, 37J55, 57R17}

\thanks{H.~G.\ and K.~Z.\ are partially supported by DFG grants
GE 1245/2-1 and ZE 992/1-1, respectively.}

\maketitle


\section{Introduction}
\subsection{Definitions and the main result}
Let $(M,\alpha)$ be a compact, connected contact manifold
(with a fixed choice of contact form~$\alpha$) of dimension $2n+1$,
$n\in\N$, whose boundary $\partial M$ is diffeomorphic
to $S^{2n}$.

We write $\inf_0(\alpha)$ for the infimum of all positive periods of
\emph{contractible} closed orbits of the Reeb vector field~$R_{\alpha}$.
When there are no closed contractible Reeb orbits, we have
$\inf_0(\alpha)=\infty$, otherwise $\inf_0(\alpha)$ is
a minimum and in particular positive.

Our main result will be a criterion for $M$ to be diffeomorphic to
a ball in terms of $\inf_0(\alpha)$ and an embeddability
condition on $\partial M$. To formulate this condition,
we introduce the following terminology.

\begin{defn}
(a) Write $D$ for the closed unit disc in $\R^2$.
The $(2n+1)$-dimensional manifold (with boundary)
\[ Z:=\R\times D\times\C^{n-1}\]
with contact form
\[ \acyl:=\rmd b+\frac{1}{2}(x_0\,\rmd y_0-y_0\,\rmd x_0)
-\sum_{j=1}^{n-1}y_j\,\rmd x_j\]
(with the obvious denomination of cartesian coordinates)
will be referred to as the \textbf{contact cylinder}.

(b) We say that $\partial M$ admits a \textbf{contact embedding}
into the contact cylinder $Z$ if there is an embedding $\varphi$
of a collar neighbourhood of $\partial M\subset M$ into
$\Int (Z)$ with $\varphi^*\acyl=\alpha$ and with
the image of the collar under $\varphi$ contained in the
interior of $\varphi (\partial M)$.
\end{defn}

\begin{thm}
\label{thm:main}
Assume that the boundary $\partial M\cong S^{2n}$
of a contact manifold $(M,\alpha)$ as above admits
a contact embedding into the contact cylinder, and
$\inf_0(\alpha)\geq\pi$. Then $M$ is diffeomorphic to a ball.
\end{thm}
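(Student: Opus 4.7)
My plan is to show that $M\cong B^{2n+1}$ by filling $M$ with a compact singular family of embedded $J$-holomorphic Bishop discs, in the spirit of the Gromov--Eliashberg--McDuff ball characterisation. The standard family at infinity will be supplied by the cylindrical structure of $Z$ near $\partial M$, and the Reeb period bound $\inf_0(\alpha)\geq\pi$ will rule out the bubbling that could obstruct the extension of this family into the interior of $M$.

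First I would use the embedding $\varphi$ to form the non-compact contact manifold
\[
\widehat{M}:=M\cup_{\varphi}(Z\setminus B),
\]
where $B\subset\Int(Z)$ is the compact region bounded by the embedded sphere $\varphi(\partial M)$; the collar hypothesis guarantees the gluing is well-defined and produces a smooth contact form on $\widehat{M}$ that agrees with $\acyl$ on a cylindrical end whose Reeb vector field $\partial_{b}$ has no closed orbits. Consequently the closed Reeb orbits of $\widehat{M}$ are exactly those of $\alpha$ in $M$ and still satisfy the bound $\inf_0\geq\pi$. I would then pass to the symplectisation $\R_{t}\times\widehat{M}$ with symplectic form $d(e^{t}\alpha)$ and equip it with a compatible almost complex structure $J$ whose restriction to the cylindrical end makes the model family
\[
u_{b,\zeta}(z)=\bigl(\tfrac{1}{4}|z|^{2},\, b,\, z,\, \zeta\bigr),\qquad z\in D,\ (b,\zeta)\in\R\times\C^{n-1},
\]
$J$-holomorphic; each such disc has $d\acyl$-area equal to $\pi$.

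The heart of the proof is the moduli space $\mathcal{M}$ of $J$-holomorphic discs in the same boundary and homology class. Standard index theory makes $\mathcal{M}$ a $(2n{-}1)$-dimensional manifold, identified with the model parameter space $\R\times\C^{n-1}$ near the cylindrical end. Gromov/SFT compactness permits only two modes of degeneration: sphere bubbling, which is excluded by exactness of $d(e^{t}\alpha)$, and breaking off of a finite-energy plane asymptotic to a closed Reeb orbit of $\alpha$ in $M$. Any such plane has energy equal to the period of its asymptotic orbit, which by hypothesis is at least $\pi$; since the total area of the Bishop disc is exactly $\pi$, the breaking plane absorbs all the energy and the residual disc collapses onto a point of the orbit. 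The only admitted degeneration is thus a single central collapse onto a closed Reeb orbit in $M$, so that the compactified moduli space is obtained from $\R\times\C^{n-1}$ by adjoining one limit configuration, yielding $\overline{\mathcal{M}}\cong S^{2n-1}$.

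Finally, standard arguments on embedded Bishop families show that the evaluation map $\overline{\mathcal{M}}\times D\to\widehat{M}$ is a singular smooth disc fibration realising $M$ as the cone on $\overline{\mathcal{M}}\cong S^{2n-1}$, and hence $M\cong B^{2n+1}$. The hard part will be the compactness analysis: running Gromov compactness in the mixed cylindrical/compact geometry of $\R\times\widehat{M}$ with a non-standard (non-Lagrangian) boundary condition, and extracting from $\inf_{0}(\alpha)\geq\pi$ precisely the energy inequality needed to permit the single central collapse while excluding every other mode of bubbling.
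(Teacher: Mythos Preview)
Your broad strategy---build a moduli space of Bishop discs extending the model family and use compactness to fill $M$---matches the paper's, but the endgame contains two genuine gaps.

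First, the compactness picture is inverted. Under $\inf_0(\alpha)\geq\pi$ (which one may sharpen to $>\pi$ by regarding $\varphi$ as landing in a cylinder of radius $r<1$), breaking is \emph{excluded}, not accommodated: any finite-energy plane would asymptote to a contractible orbit of period strictly less than the disc energy~$\pi$, contradicting the hypothesis, and disc bubbling is ruled out because non-constant discs have energy in~$\pi\mathbb{N}$. Thus the truncated moduli space (discs meeting the glued-in region) is already a compact manifold with boundary; nothing collapses. There is no ``single central collapse'' point, and your identification $\overline{\mathcal{M}}\cong S^{2n-1}$ is not what the analysis produces. Even if one allowed breaking at period exactly~$\pi$, there is no mechanism forcing the broken configurations to form a single point in the compactification; a priori there could be none, or a positive-dimensional family.

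Second, and more seriously, in dimension $2n+1\geq 5$ the Bishop discs do \emph{not} project to an embedded foliation of the target: distinct discs may overlap, and the evaluation map $\mathcal{M}\times D\to\widehat{Z}$ is only a proper degree-$1$ map, not a diffeomorphism or a singular disc fibration. (The foliation-by-discs picture is a purely $3$-dimensional phenomenon, as the paper's Remarks~(2)--(3) point out.) Consequently one cannot read off ``$M$ is a cone on $S^{2n-1}$'' directly. The paper instead uses the degree-$1$ property of the evaluation map to prove that $\widehat{Z}$ is simply connected and has the integral homology of a point; since $M$ is a deformation retract of $\widehat{Z}$ via the smooth Schoenflies theorem, $M$ is a simply connected homology ball with boundary $S^{2n}$, and the $h$-cobordism theorem (Milnor) then yields $M\cong B^{2n+1}$. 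This topological step is essential and is absent from your outline.
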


This theorem has been proved for $\dim M=3$ by Eliashberg
and Hofer~\cite{elho94}. In that paper, they also
announced the theorem for the higher-dimensional case,
but a proof has never been published. They formulated
the higher-dimensional case under the additional
homological assumption $H_2(M;\R)=0$; this condition, as we shall see,
is superfluous.

For simplicity, we shall assume throughout that $n\geq 2$,
although a large part of our argument also works for $n=1$.
Our proof shows that $(M,\alpha)$ is diffeomorphic to a ball
whenever $\partial M\cong S^{2n}$ admits a contact embedding into
the cylinder $Z_r:=\R\times D^2_r\times\C^{n-1}$ of radius $r$,
and $\inf_0(\alpha)>
\pi r^2$. Given a contact embedding into $Z=Z_1$, it may be
regarded as an embedding into a cylinder of slightly smaller
radius. Hence, even though the proof below will be based
on the assumption $\inf_0(\alpha)>\pi$, the result holds
under the weaker assumption $\inf_0(\alpha)\geq\pi$.
\subsection{Idea of the proof}
\label{subsection:idea}
The contact embedding $\varphi$ of $\partial M$ into the contact cylinder
$Z$ allows us to form a new contact manifold $\wR$ by removing the
bounded component of $\R^{2n+1}\setminus\varphi(\partial M)$
and gluing in $M$ instead. Similarly, we write $\wZ$ for the
cylinder $Z$ with $M$ glued in. We shall be studying the moduli space $\WW$ of
holomorphic discs $u=(a,f)\co \D\rightarrow \R\times\wR=:W$ in the
symplectisation $W$ of~$\wR$, where the discs are subject to certain boundary
and homological conditions. (We always write $\D$ for the closed unit disc
in $\C$ when regarded as the domain of definition of our holomorphic discs.)
It will turn out that $f(\D)$ is always contained in~$\wZ$.
We then have the following dichotomy. Either
the evaluation map
\[ \begin{array}{rccc}
\ev\co & \WW\times\D          & \longrightarrow & \wZ\\
       & \bigl( (a,f),z\bigr) & \longmapsto & f(z)
\end{array} \]
is proper and surjective, i.e.\ gives a filling, in which case
topological arguments involving the $h$-cobordism theorem
can be used to show that $M$ must be a ball.
Otherwise there will be breaking of holomorphic discs, which entails
the existence of short contractible periodic Reeb orbits
as in Hofer's paper~\cite{hofe93}.
\subsection{Remarks}
(1) The bound $\pi$ in Theorem~\ref{thm:main} is optimal.
Inside $Z$ one can form the connected sum
as described by Weinstein~\cite{wein91}, cf.~\cite[Section~6.2]{geig08},
with any contact manifold,
producing a belt sphere of radius $r_0$ smaller than, but
arbitrarily close to~$1$.
Inside this belt sphere one finds a periodic orbit of length $\pi r_0^2$.

(2) In the $3$-dimensional case, Theorem~\ref{thm:main} can be strengthened.
If $\inf_0(\alpha)\geq\pi$, then there are in fact no closed Reeb orbits
at all. (This was part of the formulation of the theorem
in~\cite{elho94}.) In this $3$-dimensional case,
the holomorphic discs project to embedded discs in $\wZ$,
where they produce a foliation by discs transverse to the Reeb direction,
see~\cite[Section~2]{elho94}. This precludes closed orbits.

(3) The existence of a foliation by discs as in (2) implies that
there cannot even be trapped Reeb orbits, i.e.\ orbits that are bounded
in forward or backward time. In \cite{grz} we show in joint work
with Nena R\"ottgen that this is a purely $3$-dimensional
phenomenon. In higher dimensions it is possible to have
a Reeb dynamics on Euclidean space, standard outside a compact set,
with trapped orbits but no periodic ones.

(4) One may consider manifolds $M$ with disconnected boundary
(and boundary components different from~$S^{2n}$).
The requirement of a contact embedding
into the contact cylinder $Z$ is made for each component of
$\partial M$ individually. By translating the images of these
components in the $\R$-direction one may then assume without loss of
generality that they are not nested. The collection $\varphi (\partial M)$
of these images is contained in a large ellipsoid $E$ inside $\Int (Z)$.
The manifold obtained from $E$ by removing the interiors of the components
of $\varphi(\partial M)$ and gluing in $M$ instead has non-trivial
fundamental group: by taking a path in $M$ joining two boundary
components, and a second path joining these two boundary points
in the exterior of $\varphi(\partial M)\subset E$, one creates an essential
loop. It follows that this manifold contains a contractible
Reeb orbit of period smaller than $\pi$. This orbit must in fact be contained
entirely in~$M$, since the Reeb flow on $Z$ is positively
transverse to any hypersurface $\{b\}\times D^{2n}$.

In other words, Theorem~\ref{thm:main} provides a means of detecting
contractible periodic orbits on non-compact manifolds or manifolds with
boundary. See \cite{bpv09,bprv,suze} for related work.
\section{Symplectisations of contactisations}
The contact cylinder $Z$ may be regarded as the contactisation
of the exact symplectic manifold $D\times\C^{n-1}\subset\C^n$. In the latter,
we have the obvious holomorphic discs $D\times\{*\}$.
In order to lift these to holomorphic discs in the symplectisation
of~$Z$, it is advantageous to proceed in two steps: first lift them
to holomorphic discs in $\C\times D\times\C^{n-1}$, and then transform
them to holomorphic discs in the symplectisation $\R\times Z$
using an explicit biholomorphism
\[ \Phi\co \R\times\R\times D\times\C^{n-1}\longrightarrow
\C\times D\times\C^{n-1}.\]
The desired boundary condition for the holomorphic discs on the
left-hand side gives us the boundary conditions for the holomorphic discs
on the right.

This allows one to transform a Cauchy--Riemann problem on the left with
respect to a `twisted' almost complex structure
(which preserves the contact hyperplanes and pairs the Reeb with
the symplectisation direction) into a Poisson problem on
a single real-valued function.

This idea is implicit in \cite[p.~1320]{elho94} and has also been
used in \cite[Proposition~5]{nied06}. Before we turn
to our specific situation, we discuss this transformation in
slightly greater generality.
\subsection{Lifting holomorphic discs}
\label{subsection:lifting}
Let $(V,J_V)$ be a Stein manifold of complex dimension~$n$.
We write $\psi$ for a plurisubharmonic potential on $V$, so
that $\omega_V:=-\rmd (\rmd\psi\circ J_V)$ is a K\"ahler form on~$V$.
In fact, what is really relevant for the following discussion is
the existence of such a potential, not the integrability
of~$J_V$, cf.~\cite[Section~3.1]{geze12}. Write $\lambda:=-\rmd\psi
\circ J_V$ for the primitive $1$-form of the symplectic form~$\omega_V$.

The contactisation of $V$ is $(\R\times V,\alpha:=\rmd b+\lambda)$,
where $b$ denotes the $\R$-coordinate. Notice that $\partial_b$ is the
Reeb vector field of the contact form~$\alpha$. A symplectisation of this
manifold is
\[ \bigl(\R\times\R\times V,\omega:=\rmd (\tau\alpha)\bigr),\]
where $\tau$ is a strictly increasing smooth
positive function on the first $\R$-factor
(whose coordinate we shall denote by~$a$). A compatible almost complex
structure $J$ on this symplectic manifold, which in addition preserves the
contact hyperplanes
\[ \ker\alpha=\{ v-\lambda(v)\partial_b\co v\in TV\} \]
on $\{ a\}\times\R\times V$, is given by
\[ J(\partial_a)=\partial_b\;\;\;\text{and}\;\;\;
J(v-\lambda(v)\partial_b)=J_V v-\lambda(J_V v)\partial_b.\]
If $J_V$ is not integrable, then $J$ may only be tamed by~$\omega$.

A straightforward calculation gives the following generalisation
of~\cite[Proposition~5]{nied06}:

\begin{prop}
\label{prop:biholo}
The map
\[ \begin{array}{rccc}
\Phi\co & (\R\times\R\times V,J) & \longrightarrow &
               (\C\times V,\rmi\oplus J_V)\\
        & (a,b,\bfz)             & \longmapsto     &
               (a-\psi(\bfz)+\rmi b,\bfz)
\end{array} \]
is a biholomorphism.\hfill\qed
\end{prop}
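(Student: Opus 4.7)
The plan is to verify biholomorphicity in the usual two steps: produce an explicit smooth inverse to $\Phi$, then check on a convenient frame that its differential intertwines $J$ with $\rmi\oplus J_V$. The inverse is simply
\[ \Phi^{-1}(w,\bfz)=\bigl(\re(w)+\psi(\bfz),\im(w),\bfz\bigr), \]
which is manifestly smooth, so $\Phi$ is a diffeomorphism.

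For complex-linearity of $d\Phi$, I would introduce coordinates $w=s+\rmi t$ on the $\C$-factor of the target and compute the differential on the natural frame $\{\partial_a,\partial_b\}\cup TV$:
\[ d\Phi(\partial_a)=\partial_s,\qquad d\Phi(\partial_b)=\partial_t,\qquad d\Phi(X)=-\rmd\psi(X)\,\partial_s+X\ \text{for}\ X\in TV. \]
The relation $d\Phi\circ J=(\rmi\oplus J_V)\circ d\Phi$ on $\partial_a$ is immediate from $\rmi\partial_s=\partial_t$ together with $J\partial_a=\partial_b$. The main check is on a horizontal vector $X-\lambda(X)\partial_b\in\ker\alpha$: applying $J$ yields $J_V X-\lambda(J_V X)\partial_b$, and pushing forward via $d\Phi$, combined with the identity $\lambda(J_V X)=\rmd\psi(X)$ (a consequence of $\lambda=-\rmd\psi\circ J_V$ and $J_V^2=-\id$), gives
\[ d\Phi\bigl(J(X-\lambda(X)\partial_b)\bigr)=-\rmd\psi(J_V X)\,\partial_s-\rmd\psi(X)\,\partial_t+J_V X. \]
The opposite composition is obtained by first pushing $X-\lambda(X)\partial_b$ forward, rewriting $\lambda(X)$ by the same identity, and then applying $\rmi\oplus J_V$ componentwise; the result agrees with the previous display.

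The only real subtlety lies in correctly tracking signs via the identity $\lambda(J_V X)=\rmd\psi(X)$; beyond that the verification is a purely algebraic unwinding of the definition of $J$, which is why the authors characterise the computation as straightforward and why no analytic difficulty is hidden.
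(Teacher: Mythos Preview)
Your proof is correct and is precisely the ``straightforward calculation'' the paper alludes to; the paper omits any details and simply marks the proposition with a \qed, so your explicit verification of the inverse and of $d\Phi\circ J=(\rmi\oplus J_V)\circ d\Phi$ on the frame $\{\partial_a,\partial_b\}\cup\ker\alpha$ is exactly what is intended.
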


Given a holomorphic disc $\D\ni z\mapsto h(z)\in V$,
we want to lift this to a holomorphic disc
\[ \D\ni z\longmapsto (a(z), b(z),h(z)) \]
in the symplectisation, with boundary in
the zero level of the symplectisation, i.e.\ $a|_{\partial\D}\equiv 0$.
By Proposition~\ref{prop:biholo}, the functions $a$ and $b$
are found as follows. Let $a\co\D\rightarrow\R$ be the unique solution,
smooth up to the boundary, of the Poisson problem
\[ \left\{\begin{array}{rcll}
\Delta a & = & \Delta (\psi\circ h) & \text{on $\Int(\D)$},\\
a        & = & 0                    & \text{on $\partial\D$}.
\end{array}\right. \]
Then $a-\psi\circ h$ is harmonic, and we may choose the function $b$
(unique up to adding a constant) such that $a-\psi\circ h+\rmi b$
is holomorphic. Notice that the function $a$ is subharmonic.
\subsection{Examples}
\label{subsection:ex-disc}
(1) Our first example shows how to derive the
set-up of \cite{elho94} in this general context. We take $V=\C$
with plurisubharmonic potential $\psi(x+\rmi y)=x^2/2$. This
yields the contact form $\rmd b+x\,\rmd y$ on $\R\times\C$. Start with
the holomorphic disc $h\co\D\rightarrow \C$ given by inclusion.
The solution $a$ of the corresponding Poisson problem ---
this is equation (52) in~\cite{elho94} --- is given by
$a(x,y)=(x^2+y^2-1)/4$. For $b$ one obtains $b(x,y)=b_0-xy/2$. Notice
that $a-\psi\circ h+\rmi b$ is the holomorphic function $z\mapsto
-(z^2+1)/4+\rmi b_0$.

(2) For our second example we take $V=\C$ with plurisubharmonic potential
$\psi(z)=|z|^2/4$. This gives rise to the contact form
$\rmd b+(x\,\rmd y-y\,\rmd x)/2$ on $\R\times\C$. The solution $a$ of
the Poisson problem is unchanged, but $b$ is now simply a constant
function. The example in \cite{nied06} is obtained by
crossing this $V$ with a cotangent bundle $T^*Q$, on which one
takes the plurisubharmonic potential $\|\mathbf{p}\|^2/2$, with $\mathbf{p}$
denoting the fibre coordinate, corresponding to the canonical
Liouville $1$-form on $T^*Q$.
\subsection{The contact cylinder}
\label{subsection:cylinder}
The contact form $\acyl$ on the contact cylinder
$Z=\R\times D\times\C^{n-1}$ derives from the plurisubharmonic potential
\[ \psi (z_0;z_1,\ldots,z_{n-1}):=\frac{1}{4}|z_0|^2+
\frac{1}{2}\sum_{j=1}^{n-1}y_j^2\]
on $D\times\C^{n-1}$, where $z_j=x_j+\rmi y_j$, $j=0,1,\ldots,n-1$.

Similar to Example~\ref{subsection:ex-disc}~(2),
for any choice of parameters $b\in\R$, $\bfs,\bft\in\R^{n-1}$, we have the
holomorphic discs
\[ \begin{array}{rccc}
u^{\bft}_{\bfs,b}\co & \D & \longrightarrow & \R\times\R\times D^{2n}\\
                     & z  & \longmapsto     & \bigl(\frac{1}{4}
   (|z|^2-1),b,z,\bfs+\rmi\bft\bigr),
\end{array}\]
lifting the obvious holomorphic discs  in $D\times\C^{n-1}$.
The disc $u^{\bft}_{\bfs,b}$ has boundary on the Lagrangian
cylinder
\[ L^{\bft}:=\{0\}\times\R\times S^1\times\R^{n-1}\times\{\bft\}\]
in $\R\times Z$. These Lagrangian cylinders foliate
$\partial(\{0\}\times Z)$.
\section{The moduli space of holomorphic discs}
We now form the contact manifold $(\wR,\halp)$ as explained in
Section~\ref{subsection:idea}. Let
\[ \bigl(W:=\R\times\wR,\omega:=\rmd(\tau\halp)\bigr) \]
be its symplectisation, where $\tau\co\R\rightarrow\R^+$
is a smooth function with $\tau'>0$ and $\tau(a)=\rme^a$
for $a\geq 0$. The freedom of choosing $\tau$ on $\{ a<0\}$
is required for the asymptotic analysis cited in
Section~\ref{section:compactness}.
\subsection{The almost complex structure}
\label{subsection:complex}
Choose $b_0,r,R\in\R^+$ with $r<1$ such that $\varphi(\partial M)$
is contained in the interior of the box
\[ B:=[-b_0,b_0]\times D^2_r\times D^{2n-2}_R\subset Z,\]
where $D_{\rho}^{2k}\subset\C^k$ denotes a closed $2k$-disc
of radius~$\rho$. We write $\wB$ for the result of gluing
$M$ into this box, in other words,
\[ \wR = \wB\cup_{\partial B}\bigl((\R\times\C\times\C^{n-1})
\setminus\Int (B)\bigr).\]
We shall also have occasion to use the notation $\wZ$ for the
cylinder $Z$ with $M$ glued in, that is,
\[ \wZ=\wR\setminus \bigl( \R\times (\C\setminus\Int (D))\times
\C^{n-1}\bigr).\]

On the symplectic manifold $(W,\omega)$ we choose an almost complex
structure $J$ compatible with $\omega$ subject to the following conditions:
\begin{itemize}
\item[(J1)] On the complement of $\R\times\Int(\wB)$,
the almost complex structure
$J$ equals the one described in Section~\ref{subsection:lifting}.
\item[(J2)] On
$\R\times\Int(\wB)$,
we make a generic choice (in a sense explained in
Section~\ref{subsection:regular})
of an $\R$-invariant almost complex structure $J$ preserving $\ker\halp$
and satisfying $J(\partial_a)=R_{\halp}$.
\end{itemize}

Condition (J1) will allow us to
prove that holomorphic discs in the relevant region are standard.
Condition (J2) implies that the breaking of holomorphic discs
corresponds to cylindrical ends asymptotic to Reeb orbits.
\subsection{The moduli space}
\label{subsection:moduli}
We now consider holomorphic discs (smooth up to the
boundary) of the form
\[ u=(a,f)\co (\D,\partial\D)\longrightarrow (W=\R\times\wR,L^{\bft}), \]
i.e.\ with Lagrangian boundary condition,
where $\bft$ is allowed to vary over $\R^{n-1}$.
We shall call the value of $\bft$ corresponding to a given $u$
the `boundary level' of the holomorphic disc.

We define $\WW$ to be the moduli space of such discs~$u$, which are
supposed to satisfy the following conditions:
\begin{itemize}
\item[(M1)] The relative homology class $[u]\in H_2(W,L^{\bft})$,
with $\bft$ equal to the boundary level of~$u$,
equals that of $\utsb$ for some $b\in\R$, $\bfs\in\R^{n-1}$,
where $|b|,|\bfs|$ are large (such that $\utsb$ may be regarded
as a holomorphic disc in~$W$).
\item[(M2)] For $k=0,1,2$ we have
$u(\rmi^k)\in L^{\bft}\cap\{z_0=\rmi^k\}$.
\end{itemize}

Let $u=(a,f)$ be a holomorphic disc satisfying (M1).
By the maximum principle, $f(\D)$ is contained in~$\wZ$, see
Lemma~\ref{lem:bounds}. By the boundary lemma of E.~Hopf, applied to
a small disc in $\D$ touching a given boundary point
and mapping to the complement of $\R\times\Int(\wB)$, so that
the $z_0$-component of $u$ is defined and holomorphic on that small
disc, the boundary $u(\partial\D)$ is transverse to
\[ \{0\}\times\R\times\{\rme^{\rmi\theta}\}\times\R^{n-1}
\times\{\bft\}\subset L^{\bft}\]
for each $\rme^{\rmi\theta}\in S^1$ and, by (M1), in fact positively
transverse. Thus, condition (M2) fixes a parametrisation of~$u$.
\subsection{Properties of the holomorphic discs}
Here we collect some basic properties of the discs $u\in\WW$.

\begin{lem}
\label{lem:Maslov}
The Maslov index $\mu$ of any disc $u\in\WW$, i.e.\
the index of the bundle pair $(u^*TW,(u|_{\partial\D})^*TL^{\bft})$,
equals~$2$.
\end{lem}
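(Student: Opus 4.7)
The plan is to exploit the fact that the Maslov index depends only on the relative homology class of the disc, and thereby reduce the computation to the explicit standard disc $\utsb$ from Section~\ref{subsection:cylinder}. By condition~(M1), any $u\in\WW$ represents the same class in $H_2(W,L^{\bft})$ as some $\utsb$ with parameters $b,\bfs$ large enough that $\utsb$ is a genuine holomorphic disc in $W$, lying entirely in the cylindrical region where $J$ agrees with the almost complex structure of Section~\ref{subsection:lifting}. It therefore suffices to show $\mu(\utsb)=2$.

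For the model disc $\utsb(z)=\bigl(\tfrac14(|z|^2-1),\,b,\,z,\,\bfs+\rmi\bft\bigr)$, I would split the pulled-back tangent bundle into three complex sub-bundles compatible with the product structure of the cylindrical region: the symplectisation-plus-Reeb complex line spanned by $\partial_a,\partial_b$ (with $J\partial_a=\partial_b$); the complex line corresponding to the $z_0$-direction; and the constant $\C^{n-1}$-factor in the remaining coordinates. Under this splitting the Lagrangian $L^{\bft}=\{0\}\times\R\times S^1\times\R^{n-1}\times\{\bft\}$ decomposes along $\partial\utsb$ as $\R\{\partial_b\}\oplus T S^1\oplus\R^{n-1}$. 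The first and third summands are constant real subspaces of their ambient complex vector spaces, so each contributes $0$ to the Maslov index. The middle summand is the loop
\[
\rme^{\rmi\theta}\;\longmapsto\;\R\cdot\rmi\rme^{\rmi\theta}\;\subset\;\C,
\]
which winds once around $\Lambda(\C)\cong\R P^1$ and therefore has Maslov index~$2$. By additivity of the Maslov index under Whitney sum, $\mu(\utsb)=0+2+0=2$.

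The step that I expect to require the most care --- and regard as the main obstacle --- is the reduction to the standard disc, i.e.\ knowing that $\mu$ truly factors through $H_2(W,L^{\bft})$ rather than through some finer homotopy datum. In our setting this is mild: by the positive transversality established after~(M2), the loop $\partial u$ winds exactly once around the $S^1$-factor of $L^{\bft}\cong\R^n\times S^1$, so the relative homology class already pins down the free homotopy class of the boundary. Any two discs representing the same class in $H_2(W,L^{\bft})$ then differ by a spherical class in $W$, and the resulting Maslov-index difference, equal to $2\langle c_1(TW,J),\cdot\rangle$ on that sphere, is controlled by the product form of $J$ on the cylindrical end where $\utsb$ and all boundary-gluing regions sit.
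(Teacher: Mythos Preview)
Your proposal is correct and follows essentially the same approach as the paper: compute $\mu=2$ for the standard disc via the splitting $\R\rmi\oplus\R\rmi\rme^{\rmi\theta}\oplus\R^{n-1}$, then use (M1) to transfer this to an arbitrary $u\in\WW$. Your final paragraph, however, overcomplicates the reduction step. The paper simply observes that since $[u]=[\utsb]$ in $H_2(W,L^{\bft})$, the difference $u-\utsb$ is a relative boundary, and the Maslov index---being a homomorphism on $H_2(W,L^{\bft})$ by its axiomatic definition---vanishes on boundaries; there is no need to invoke $c_1(TW,J)$ on spheres or to argue separately about the boundary homotopy class.
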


\begin{proof}
We appeal to the axiomatic definition of the Maslov index
in \cite[Section~C.3]{mcsa04}. For the disc $u_0:=u^{\bft}_{{\mathbf 0},0}$
in $\R\times\R^{2n+1}$, the bundle $u_0^*T(\R\times\R^{2n+1})$ is a trivial
$\C^{n+1}$-bundle. The fibre
of the totally real subbundle $(u_0|_{\partial\D})^*TL^{\bft}$
over $\rme^{\rmi\theta}\in\partial\D$ is given by
$\R\rmi\oplus\R\rmi\rme^{\rmi\theta}\oplus\R^{n-1}$.
So the normalisation property of the Maslov index implies
$\mu (u^{\bft}_{{\mathbf 0},0})=2$.

By the homotopy invariance of the Maslov index, we have
$\mu(u^{\bft}_{\bfs,b})=2$ for all standard discs
$u^{\bft}_{\bfs,b}$ in~$W$. Finally, given any $u\in\WW$, we may choose
$u^{\bft}_{\bfs,b}$ in the same relative homology class,
so that $u-u^{\bft}_{\bfs,b}$ is a boundary. This implies
$\mu(u)=2$.
\end{proof}

\begin{lem}
\label{lem:energy}
Each disc $u\in\WW$ has symplectic energy $\int_{\D}u^*\omega$
equal to~$\pi$.
\end{lem}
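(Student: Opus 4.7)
My plan is to reduce the energy integral to a boundary integral via Stokes and then exploit the relative homology condition (M1) together with an explicit computation on the standard discs $u^{\bft}_{\bfs,b}$.

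First, since $\omega=\rmd(\tau\halp)$ is exact on $W$, Stokes' theorem immediately gives
\[ \int_{\D}u^*\omega=\int_{\partial\D}u^*(\tau\halp). \]
Now the Lagrangian boundary condition $u(\partial\D)\subset L^{\bft}$ places the image in the slice $\{a=0\}$, where $\tau(0)=\rme^{0}=1$. Moreover, $L^{\bft}$ sits in the complement of $\R\times\wB$, so there $\halp=\acyl$. Thus the integrand reduces to $u^*\acyl$ along $\partial\D$.

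Next I would compute the restriction $\acyl|_{L^{\bft}}$. Parametrising $L^{\bft}=\{0\}\times\R\times S^1\times\R^{n-1}\times\{\bft\}$ by $(b,\theta,x_1,\ldots,x_{n-1})$, we have $y_0=\sin\theta$, $x_0=\cos\theta$, and $y_j=t_j$ is constant. Substituting into the formula for $\acyl$ yields
\[ \acyl|_{L^{\bft}}=\rmd b+\tfrac{1}{2}\rmd\theta-\sum_{j=1}^{n-1}t_j\,\rmd x_j, \]
which is a closed $1$-form with de Rham cohomology class $\tfrac{1}{2}[\rmd\theta]\in H^1(L^{\bft};\R)$. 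Consequently the integral $\int_{\partial\D}u^*\acyl$ depends only on the class $[u|_{\partial\D}]\in H_1(L^{\bft})$.

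Finally, condition (M1) asserts $[u]=[u^{\bft}_{\bfs,b}]$ in $H_2(W,L^{\bft})$ for appropriate parameters, and applying the connecting homomorphism $\partial\co H_2(W,L^{\bft})\to H_1(L^{\bft})$ of the long exact sequence gives $[u|_{\partial\D}]=[u^{\bft}_{\bfs,b}|_{\partial\D}]$. So it suffices to evaluate the integral on the standard disc, whose boundary is $\theta\mapsto(0,b,\rme^{\rmi\theta},\bfs+\rmi\bft)$; direct substitution yields $\int_0^{2\pi}\tfrac{1}{2}\,\rmd\theta=\pi$. There is no serious obstacle here: the argument is essentially a bookkeeping exercise, and the only point requiring a moment of care is verifying that $\acyl$ pulls back to a \emph{closed} form on $L^{\bft}$ so that the energy is controlled by the homology class of the boundary rather than the specific parametrisation.
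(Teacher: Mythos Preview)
Your proof is correct and follows essentially the same approach as the paper: apply Stokes to reduce to a boundary integral, observe that the primitive $1$-form is closed on $L^{\bft}$, use (M1) to pass to the boundary homology class of a standard disc, and compute there. The only cosmetic difference is that the paper deduces closedness of $\halp|_{L^{\bft}}$ directly from $L^{\bft}$ being Lagrangian (since $\omega|_{L^{\bft}}=0$ and $\tau\equiv 1$ there), whereas you verify it by an explicit coordinate computation of $\acyl|_{L^{\bft}}$; both are fine.
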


\begin{proof}
Choose a standard disc $\utsb$ in the same relative class in
$H_2(W,L^{\bft})$ as~$u$. Then in particular $[\partial u]=
[\partial\utsb]$ in $H_1(L^{\bft})$. Since $L^{\bft}$ is Lagrangian,
the pull-back of the $1$-form $\halp$ to $L^{\bft}$ is closed, and hence
\[ \int_{\partial u}\halp=\int_{\partial\utsb}\halp.\]
One then computes
\[ \int_u\omega=\int_{\partial u}\halp=\int_{\partial\utsb}\halp
=\int_{\partial\utsb}\acyl=\pi.\qedhere\]
\end{proof}

\begin{rem}
\label{rem:energy}
By the same argument we see that any non-constant holomorphic disc
in $W$ with boundary on $L^{\bft}$ has symplectic energy
in $\pi\N$.
\end{rem}

\begin{lem}
\label{lem:simple}
All discs $u\in\WW$ are simple.
\end{lem}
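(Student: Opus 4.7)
The plan is to combine a factorization theorem for non-simple holomorphic discs with the energy quantization provided by Remark~\ref{rem:energy} and Lemma~\ref{lem:energy}.

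First, I would invoke the disc analogue of McDuff's theorem on multiply covered pseudoholomorphic curves: any non-constant $J$-holomorphic disc
\[ u\co (\D,\partial\D)\longrightarrow (W,L^{\bft}) \]
factors as $u=v\circ\phi$, where $v\co (\D,\partial\D)\rightarrow (W,L^{\bft})$ is a simple $J$-holomorphic disc and $\phi\co (\D,\partial\D)\rightarrow (\D,\partial\D)$ is a holomorphic branched covering of some degree $k\geq 1$, with $u$ simple precisely when $k=1$. Note that since $\phi(\partial\D)\subset\partial\D$ and $u(\partial\D)\subset L^{\bft}$, the underlying simple disc $v$ has boundary on the \emph{same} Lagrangian $L^{\bft}$.

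Second, I would use multiplicativity of symplectic energy under the branched covering~$\phi$:
\[ \int_{\D}u^*\omega=k\int_{\D}v^*\omega. \]
By Lemma~\ref{lem:energy} the left-hand side equals~$\pi$, while by Remark~\ref{rem:energy} the energy of the non-constant disc $v$ lies in $\pi\N$, say $\int_{\D}v^*\omega=m\pi$ with $m\geq 1$. Hence $km=1$, forcing $k=m=1$, so that $u=v$ is simple.

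The main obstacle is citing (or justifying) the factorization theorem in the correct form for discs with Lagrangian boundary. For closed curves this is McDuff's classical result, but for discs the cleanest reference is Kwon--Oh or the treatment in Lazzarini's work, which shows that the image of a non-simple disc factors through a branched covering in the same way. Once this is granted, the proof is an immediate energy count, so there is no further analytic difficulty.
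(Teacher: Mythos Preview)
Your energy-counting idea is exactly right and is what the paper does, but the structure theorem you invoke is stated incorrectly, and this is a genuine gap rather than a citation nicety. For closed $J$-holomorphic curves McDuff's theorem gives a global factorisation $u=v\circ\phi$ through a single simple curve and a holomorphic branched cover of the domain. For discs with Lagrangian (or totally real) boundary this is \emph{false} in general: a non-simple disc need not be a branched cover of one simple disc. What Lazzarini (and Kwon--Oh) actually prove is that the domain $\D$ carries a ``frame'' --- a graph whose complementary regions are subdiscs --- such that on each piece $u$ restricts to a multiple cover of some simple disc; different pieces may cover \emph{different} simple discs. The upshot at the level of homology is that
\[
[u]=\sum_i m_i\,[v_i]\in H_2(W,L^{\bft}),\qquad m_i\in\N,
\]
with each $v_i$ a simple non-constant disc with boundary on~$L^{\bft}$. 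This is precisely the statement of \cite[Theorem~A]{lazz11} that the paper uses.

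Once you replace your single-cover factorisation by this decomposition, your argument and the paper's coincide: by Lemma~\ref{lem:energy} the left-hand side has energy~$\pi$, while by Remark~\ref{rem:energy} each $v_i$ has energy in~$\pi\N$, so the sum is forced to consist of a single term with $m_1=1$, and the disc that Lazzarini's construction produces is $u$ itself (up to reparametrisation), hence $u$ is simple. So the fix is not to look for a sharper factorisation theorem --- none exists in the form you want --- but to use the homological decomposition directly.
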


\begin{proof}
According to \cite[Theorem~A]{lazz11}, the homology class
$[u]\in H_2(W,L^{\bft})$ of a holomorphic disc
with totally real boundary condition can be decomposed into 
positive multiples of homology classes
represented by simple discs, which are obtained from a decomposition
of~$\D$. Since the class $[u]=[\utsb]\in H_2(W,L^{\bft})$
is indecomposable by Lemma~\ref{lem:energy} and Remark~\ref{rem:energy},
the disc $u$ itself must be simple.
\end{proof}

Simplicity of the discs $u=(a,f)$ will not be quite enough for
our purposes. We shall also need simplicity of $f$ in the sense
of the following lemma, cf.~\cite[Theorem~1.14]{hwz99}.
Here $\pi$ denotes the projection of
$TM$ onto $\ker\halp$ along the Reeb vector field $R_{\halp}$.

\begin{lem}
\label{lem:f-injective}
For each $u=(a,f)\in\WW$, the set
\[ \Finj:=\bigl\{ z\in\D\co \pi\circ T_zf\neq 0,\
f^{-1}(f(z))=\{ z\}\bigr\}\]
of `$f$-injective points' is open and dense in $\D$.
\end{lem}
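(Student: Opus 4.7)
The proof follows the strategy of \cite[Theorem~1.14]{hwz99}, which handles the analogous statement for finite-energy holomorphic planes in a symplectisation. Two facts need to be shown: the set $N := \{z \in \D : \pi \circ T_z f = 0\}$ has empty interior, and within $\D \setminus N$ the $f$-injective points are dense. Openness of $\Finj$ then follows by a standard implicit function theorem argument applied to $f$, viewed modulo the Reeb direction as a local immersion near points of $\Finj$.

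To see that $N$ has empty interior, first rule out $\pi \circ Tf \equiv 0$. In that case $f$ would locally, and hence by connectedness of $\D$ globally, parametrise a piece of a single Reeb orbit of $R_{\halp}$. But $f(\partial\D) \subset L^{\bft} \subset \{0\} \times Z$ by Lemma \ref{lem:bounds}, and on $Z$ the Reeb vector field is $\partial_b$, which preserves the coordinate $z_0$. Thus $z_0$ would be constant along $f(\partial\D)$, contradicting condition (M2), which forces $z_0(f(\rmi^k)) = \rmi^k$ for $k = 0, 1, 2$. Since $J$ is $\R$-invariant and preserves $\ker\halp$ throughout $W$ by the construction in Sections \ref{subsection:lifting} and \ref{subsection:complex}, the section $\pi \circ Tf$ of $\mathrm{Hom}(T\D, f^*\ker\halp)$ satisfies a linear Cauchy--Riemann-type equation, so Carleman's similarity principle renders its zero set in $\Int(\D)$ discrete. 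Boundary zeros are ruled out by the positive transversality of $u(\partial\D)$ to the $z_0$-circles of $L^{\bft}$ recorded in Section \ref{subsection:moduli}: if $\pi \circ T_z f = 0$ at $z \in \partial\D$, then $T_z f(T_z\partial\D) \subset \R R_{\halp} = \R\partial_b$, and the $z_0$-derivative along $\partial\D$ at $z$ would vanish, contradicting transversality.

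For density of $f$-injective points in $\D \setminus N$, one argues by contradiction. If there were an open set $U \subset \D \setminus N$ on which no point is $f$-injective, then the $\R$-invariance of $J$ allows one to compensate for the mismatch in $a$-coordinates between coincident $f$-points and, following \cite[proof of Theorem~1.14]{hwz99}, to construct a non-trivial factorisation $u = v \circ \varphi$ of $u$ through a holomorphic branched cover $\varphi \co \D \to \D$ of degree greater than~$1$ and a holomorphic disc~$v$. This contradicts simplicity of $u$ established in Lemma \ref{lem:simple}. The principal technical obstacle in carrying this out is respecting the Lagrangian boundary condition: one needs the cover $\varphi$ to extend smoothly to $\partial\D$, which is guaranteed by boundary regularity for $J$-holomorphic discs with totally real boundary together with the positive transversality of $u|_{\partial\D}$ to the characteristic foliation of $L^{\bft}$.
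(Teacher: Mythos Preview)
Your treatment of the zero set $N=\{\pi\circ T_zf=0\}$ is essentially the paper's: both use Carleman similarity in the interior and the positive transversality of $u|_{\partial\D}$ to the $z_0$-circles to exclude boundary zeros, concluding that $N$ is in fact finite.

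The density step, however, diverges from the paper, and your version has a real gap. You assert that the \cite{hwz99} argument produces a global factorisation $u=v\circ\varphi$ through a branched cover $\varphi\co\D\to\D$ of degree $>1$, and that boundary regularity plus transversality forces $\varphi$ to extend over $\partial\D$. Neither claim is supported: the \cite{hwz99} argument is a local coincidence-plus-unique-continuation argument, not a construction of a global branched cover of the domain; and for discs with Lagrangian boundary there is no general structure theorem delivering such a $\varphi$ (this is why the paper invokes Lazzarini's \cite{lazz11} decomposition in Lemma~\ref{lem:simple} rather than a naive multiple-cover statement). Your handling also does not exclude the possibility that for $z\in U$ the extra preimages in $f^{-1}(f(z))$ lie on $\partial\D$, which would block the interior argument.

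The paper avoids these difficulties by first localising the problem away from the boundary. It makes three preliminary observations: (i) $N$ is finite; (ii) there are no mixed intersections, i.e.\ no pair $z\in\Int(\D)$, $w\in\partial\D$ with $f(z)=f(w)$, by the maximum principle for $h_0$; (iii) $u|_{\partial\D}$ has only finitely many double points, proved via the accumulation results of \cite{zehm13} together with simplicity. From (ii) and (iii) one sees that $\Finj$ already contains $\partial\D$ minus a finite set. Then, assuming $\Finj$ is not dense, one finds an open $U\subset\Int(\D)$ with $\pi\circ Tf\neq 0$ on $f^{-1}(f(U))\subset\Int(\D)$ and multiple preimages everywhere on~$U$; the explicit $U$--$U_k$--$U_k'$ shrinking argument from \cite[pp.~459/60]{hwz99} then yields two interior pieces of the disc with coinciding $f$-image, contradicting simplicity. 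No global branched cover is constructed or needed.
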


\begin{proof}
The combination of defining conditions for $\Finj$ is open, so we need
only show that $\Finj$ is dense in~$\D$. We begin with three
observations about the behaviour of the holomorphic discs~$u$.

First of all,
in a neighbourhood of the boundary $\partial\D\subset\D$ we can write
$f$ in components as $f=(b,\bfh)=(b,h_0,\ldots,h_{n-1})$ with
each $h_j$ holomorphic. By the comment in Section~\ref{subsection:moduli},
$h_0|_{\partial\D}$ is an immersion, hence $\pi\circ Tf|_{\partial\D}\neq 0$.
Moreover, a variant of the Carleman similarity principle
\cite[pp.~1315/6]{elho94} implies that the set $\{z\in\D\co
\pi\circ T_zf=0\}$ is finite.

Secondly,
the boundary $\partial\D$ maps under $f$ to $\R\times S^1\times\C^{n-1}$.
Near any point in $\Int (\D)$ that putatively maps to
$\R\times (\C\setminus \Int (D))\times\C^{n-1}$,
we could write $f=(b,\bfh)$ as above, and we would find
that $h_0$ violates the maximum principle. We conclude in particular that
there are no mixed intersections of the holomorphic disc~$u$,
i.e.\ pairs of an interior and a boundary point with the same
image.

Thirdly, from the work in \cite{zehm13} it follows that the immersion
$u|_{\partial\D}= (0,f|_{\partial\D})$ has at most finitely
many double points. Otherwise the respective preimages would
accumulate in two separate points --- for in a common limit point
the differential $Tu$ would be singular --- and
\cite[Lemma~4.2]{zehm13} would imply that the differentials $Tu$ in the
two limit points are collinear over~$\R$. Furthermore, by
Lemma~\ref{lem:bounds}~(i) below, the collinearity factor
would have to be positive. Then \cite[Lemma~4.3]{zehm13} would imply that
$u$ is not simple, contradicting the preceding lemma.

From these last two observations we infer that $\Finj$ contains
$\partial\D$ with the exception of at most finitely many points, and in
particular is non-empty.

Now we prove that $\Finj$ is dense, arguing by contradiction.
If $\Finj$ were not dense, the set $\Int(\D)\setminus\Finj$
would have non-empty interior. By the preceding observations
we can find an open subset $U\subset\Int(\D)$ such that
for each $z\in U$ the set $f^{-1}(f(z))\subset\Int(\D)$ contains more than
just the point~$z$, and such that $\pi\circ T_wf\neq 0$ in all points
$w\in f^{-1}(f(U))$. The latter implies that the points
in $f^{-1}(f(z))$ are isolated, and hence finite in number.

What follows is an explication of an argument in \cite[p.~459]{hwz99}.
Fix a point $z_0\in U$ and write $f^{-1}(f(z_0))=\{z_0,z_1,\ldots, z_N\}$.
Choose pairwise disjoint (and disjoint from~$U$) open neighbourhoods
$U_k\subset\Int(\D)$ of $z_k$, $k=1,\ldots, N$,
such that $f|_{U_k}$ is an embedding.
By a compactness argument,
$U$ can be chosen so small that
\[ f(U)\subset \bigcup_{k=1}^N f(U_k),\]
and such that $f|_{U}$ is likewise an embedding. Choose relatively
compact neighbourhoods $U_k'\subset U_k$ of $z_k$, $k=1,\ldots, N$.
By shrinking $U$ to a smaller neighbourhood of~$z_0$, we can ensure that
\[ f(U)\subset \bigcup_{k=1}^N f(U_k').\]
Set $A_k:=(f|_U)^{-1}(f(U_k'))\subset U$. If $A_1$ has non-empty
interior, we can shrink $U$ such that $f(U)\subset f(U_1')$
(but $U$ need no longer be a neighbourhood of~$z_0$).
The argument then concludes as in \cite[pp.~459/60]{hwz99},
leading to a contradiction to $u$ being simple.
If $A_1$ has empty interior, so that $U\setminus A_1$ is dense in $U$,
we find that
\[ f(U)\subset \bigcup_{k=2}^N f(\overline{U'_k})\subset
 \bigcup_{k=2}^N f(U_j).\]
The argument concludes inductively.
\end{proof}
\subsection{Bounds on the holomorphic discs}
In the next lemma we collect some restrictions on the image $u(\D)$
of the holomorphic discs $u\in\WW$.

\begin{lem}
\label{lem:bounds}
For $u=(a,f)\in\WW$ we have:
\begin{itemize}
\item[(i)] $a<0$ on $\Int (\D)$.
\item[(ii)] $f(\Int(\D))$ is contained in the interior of $\wZ$, i.e.\
\[ f(\Int(\D))\cap\bigl(\R\times(\C\setminus\Int(D))
\times\C^{n-1}\bigr)=\emptyset.\]
\end{itemize}
\end{lem}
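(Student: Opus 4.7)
For part (i), I would begin by observing that both almost complex structures described in (J1) and (J2) satisfy $J\partial_a = R_{\halp}$, preserve $\ker\halp$, and are compatible with $\omega$ on the contact hyperplanes. From the $J$-holomorphic equation a direct computation then yields $\Delta a\cdot\rmd s\wedge\rmd t = f^*\rmd\halp$, and the right-hand side is non-negative by the compatibility of $J|_{\ker\halp}$ with $\rmd\halp$. Hence $a$ is subharmonic on $\D$, and the Lagrangian boundary condition $u(\partial\D)\subset L^{\bft}\subset\{a=0\}$ gives $a|_{\partial\D}\equiv 0$, so $a\le 0$ by the weak maximum principle. For the strict interior inequality I would invoke the strong maximum principle: an interior zero of $a$ would force $a\equiv 0$, hence $f^*\rmd\halp\equiv 0$ and $\pi\circ Tf\equiv 0$, contradicting the positive transversality of $f|_{\partial\D}$ to the slices $\{0\}\times\R\times\{\rme^{\rmi\theta}\}\times\R^{n-1}\times\{\bft\}$ established in Section~\ref{subsection:moduli}.

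For part~(ii), I would set $h_0:=z_0\circ f$ and run a maximum modulus argument. The function $h_0$ is defined wherever $f$ avoids~$M$, and is holomorphic on the open subset $V:=f^{-1}(\wR\setminus\wB)\cap\D$: there condition~(J1) makes $J$ coincide with the standard symplectisation structure of Section~\ref{subsection:lifting}, under which the biholomorphism $\Phi$ makes the projection to the $z_0$-coordinate holomorphic. Since $L^{\bft}\subset\{|z_0|=1\}$ while $\wB$ lies in $\{|z_0|\le r\}$ off of~$M$ (with $r<1$ chosen in Section~\ref{subsection:complex}), we have $L^{\bft}\cap\wB=\emptyset$ and $\wB\subset\wZ$. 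In particular $\partial\D\subset V$. The principal subtlety is that $J$ need not be standard on $\R\times\Int(\wB)$, so $h_0$ is not globally holomorphic on~$\D$; I would circumvent this by analysing the connected components of~$V$ separately.

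Concretely, let $V_1$ be the connected component of $V$ containing $\partial\D$ and let $V_\alpha$ denote the remaining components. On $\partial V_1\cap\partial\D$ one has $|h_0|=1$, whereas $\partial V_1\cap\Int(\D)\subset f^{-1}(\partial\wB)$ sits in $\{|z_0|\le r\}$, so $|h_0|\le r<1$ there; maximum modulus then yields $|h_0|\le 1$ on~$V_1$. If equality held at an interior point of~$V_1$, the strong form of the principle would force $h_0$ to be constant on~$V_1$, and hence constant along~$\partial\D$; but the relative homology condition~(M1) pins the winding number of $h_0|_{\partial\D}\co S^1\to S^1$ to be~$1$, a contradiction. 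The same argument applied to each remaining component $V_\alpha$ (whose boundary lies entirely in $f^{-1}(\partial\wB)$) yields $|h_0|\le r<1$ on $V_\alpha$. Finally, for $z\in\D\setminus V$ one has $f(z)\in\wB\subset\wZ$ directly. Collectively these cases give $f(\Int(\D))\subset\wZ$.
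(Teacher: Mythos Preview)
Your proof is correct and follows essentially the same strategy as the paper: subharmonicity of $a$ plus the strong maximum principle for~(i), and a maximum-modulus argument for the $z_0$-component $h_0$ for~(ii). Two minor differences are worth noting. In~(i), the paper rules out $a\equiv 0$ by observing that this would force $u^*\omega\equiv 0$, hence $u$ constant, contradicting~(M1); you instead invoke the boundary transversality from Section~\ref{subsection:moduli}. That is legitimate (the Hopf-lemma argument there is local to~$\partial\D$ and does not logically depend on the present lemma, despite the forward reference), but the energy argument is more self-contained. In~(ii), you spell out the connected-component decomposition that the paper compresses into an ``open and closed'' one-liner; your version is actually cleaner. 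One small slip: your last sentence concludes $f(\Int(\D))\subset\wZ$, but your argument in fact establishes the strict containment in $\Int(\wZ)$ that the statement asks for.
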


\begin{proof}
(i) The holomorphicity of $u=(a,f)$ (with respect to an almost complex
structure preserving $\ker\halp$ and satisfying $J(\partial_a)=R_{\halp}$)
implies $f^*\halp=-\rmd a\circ\rmi$, so $a$ is subharmonic.
We have $a|_{\partial\D}\equiv 0$, but $a$ cannot be identically zero
on all of $\D$, for otherwise we would have $f^*\alpha\equiv 0$
and $f^*\rmd\alpha\equiv 0$, which would imply that $u$ has
zero symplectic energy density and hence is constant, contradicting (M1).
The strong maximum principle for $a$ then implies the claim.

(ii) Near the points of $\D$ mapping under $f$ to
$\R\times(\C\setminus\Int(D))\times\C^{n-1}$
we can write this map in components as $f=(b,\bfh)$.
If $f(\Int(\D))$ were not contained in $\Int (Z)$, we would find that
the map $h_0$ is defined and locally constant on a non-empty open and closed
subset of~$\D$, and hence on all of~$\D$, contradicting the
homological assumption (M1).
\end{proof}

Since a generic choice of the almost complex structure $J$
is only allowed on $\R\times\Int(\wB)$, this can be used
to guarantee regularity in the sense of \cite[Definition~3.1.4]{mcsa04}
only for those holomorphic discs that pass through
this `perturbation domain', see~\cite[Remark~3.2.3]{mcsa04}.
We therefore want to show that all
other discs belong to the standard family $\utsb$, where transversality
is obvious. This will be used below to show that $\WW$ is actually
a manifold.

\begin{lem}
\label{lem:standard1}
Let $u=(a,f)\in\WW$.
If $f(\D)\subset\wR\setminus\Int(\wB)$,
then $u=\utsb$ for some $\bfs\in\R^{n-1}$,
$b\in\R$, and $\bft$ equal to the boundary level of~$u$.
\end{lem}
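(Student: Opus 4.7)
My plan is to reduce the problem entirely to the explicit analysis of Section~\ref{subsection:cylinder} via the biholomorphism of Proposition~\ref{prop:biholo}. Since $f(\D)\subset\wR\setminus\Int(\wB)$, condition (J1) guarantees that $u$ is holomorphic for the standard almost complex structure coming from the plurisubharmonic potential $\psi$ of Section~\ref{subsection:cylinder}, and Lemma~\ref{lem:bounds}(ii) together with the boundary condition ensure $f(\D)\subset Z$, so $\Phi$ can be applied. Writing the components of $u$ as $(a,b,h_0,h_1,\ldots,h_{n-1})$ with $h=(h_0,\ldots,h_{n-1})\co\D\to D\times\C^{n-1}$, Proposition~\ref{prop:biholo} turns $u$ into the standard-holomorphic map
\[ \Phi\circ u=\bigl(a-\psi\circ h+\rmi b,\, h_0,h_1,\ldots,h_{n-1}\bigr)
\co\D\longrightarrow\C\times D\times\C^{n-1}, \]
so each of the $n+1$ components can be studied by elementary complex analysis.

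Next I would decode the Lagrangian boundary condition $u(\partial\D)\subset L^{\bft}$ component by component: $a|_{\partial\D}\equiv 0$, $\im h_j|_{\partial\D}\equiv\bft_j$ for $j\geq 1$, and $|h_0|\equiv 1$ on $\partial\D$. For $j\geq 1$ the harmonic function $\im h_j$ has constant boundary values, hence is constant on $\D$, and the Cauchy--Riemann equations for the holomorphic $h_j$ then force $h_j\equiv\bfs_j+\rmi\bft_j$ for some $\bfs_j\in\R$. The remaining component $h_0\co\D\to D$ is a non-constant holomorphic map sending $\partial\D$ into $\partial D$, hence by the classical theory a finite Blaschke product of some degree~$d$. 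The step I expect to be the main obstacle is determining~$d$. For this I would repeat the boundary computation of Lemma~\ref{lem:energy} on $\partial u\subset L^{\bft}$: the $\rmd b$-term integrates to zero over the closed loop, the $y_j\,\rmd x_j$-terms with $j\geq 1$ vanish because each $y_j=\bft_j$ is constant, and only the rotational term $(x_0\,\rmd y_0-y_0\,\rmd x_0)/2$ survives, yielding $\int_u\omega=\int_{\partial u}\acyl=\pi d$. Comparison with Lemma~\ref{lem:energy} gives $d=1$, so $h_0$ is a Möbius transformation of the disc.

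Finally, condition (M2) supplies three distinct fixed points $h_0(\rmi^k)=\rmi^k$, $k=0,1,2$, of this Möbius transformation, viewed as an element of the Möbius group of $\hat\C$; three fixed points force $h_0=\id_{\D}$. Substituting back yields $\psi\circ h(z)=|z|^2/4+\tfrac{1}{2}\sum_{j=1}^{n-1}\bft_j^2$, so the Poisson problem of Section~\ref{subsection:lifting} reduces to $\Delta a=1$ on $\Int(\D)$ with vanishing boundary values, whose unique solution is $a(z)=(|z|^2-1)/4$. The real part of the holomorphic function $a-\psi\circ h+\rmi b$ then equals the constant $-1/4-\tfrac{1}{2}\sum\bft_j^2$, so its imaginary part $b$ is constant as well. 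This identifies $u$ with $\utsb$ for some $\bfs\in\R^{n-1}$ and $b\in\R$, with $\bft$ the prescribed boundary level.
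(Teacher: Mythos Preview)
Your proof is correct and follows essentially the same route as the paper's: split into components via Proposition~\ref{prop:biholo}, use the maximum principle on $\im h_j$ for $j\geq 1$, identify $h_0$, and solve for $a$ and~$b$. The only difference is in pinning down $h_0$: the paper cites the remark after~(M2) (Hopf's lemma plus (M1)) to see directly that $h_0|_{\partial\D}$ is an orientation-preserving diffeomorphism of~$S^1$, whence $h_0$ is a disc automorphism by the argument principle; you instead classify $h_0$ as a Blaschke product and recover its degree from the energy identity of Lemma~\ref{lem:energy}. Both routes ultimately rest on the homological condition~(M1), so this is a cosmetic variation rather than a genuinely different argument.
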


\begin{proof}
Since $f$ maps to the complement of $\Int(\wB)$, we can write it
\emph{globally} as $f=(b,\bfh)$,
with every component $h_j$  of $\bfh$ a holomorphic map $\D\rightarrow\C$.
The boundary condition for $u$ means that for $j=1,\ldots ,n-1$ we have
$\im h_j=t_j$ on $\partial\D$. The minimum and maximum principle for harmonic
functions implies that $\im h_j=t_j$ on all of~$\D$. Hence, by the
open mapping theorem, $\re h_j=:s_j$ is likewise constant on $\D$
for $j=1,\ldots,n-1$.

The component $h_0$ is a holomorphic disc in $\C$ with
$h_0|_{\partial\D}$ an orientation-preserving diffeomorphism
of $\partial\D$, cf.\ the comment after condition (M2).
The argument principle implies that
$h_0$ is an orientation-preserving automorphism
of~$\D$, and then (M2) forces $h_0=\id_{\D}$.

By Proposition~\ref{prop:biholo}, the function
\[ z\longmapsto a(z)-\frac{1}{4}|h_0(z)|^2-\frac{1}{2}\sum_{j=1}^{n-1}
(\im h_j)^2
=a(z)-\frac{1}{4}|z|^2-\frac{1}{2}\sum_{j=1}^{n-1}
(\im h_j)^2\]
on $\D$ is harmonic, taking the constant value $-1/4-|\bft|^2/2$
on $\partial\D$, hence it is constant on $\D$. This means
that the imaginary part $b$ that makes this into a holomorphic
function must also be constant. Solving for $a(z)$ we get
\[ a(z)=\frac{1}{4}(|z|^2-1)\;\;\text{on $\D$},\]
i.e.\ $u=\utsb$.
\end{proof}

The next lemma will allow us to control the degree of the evaluation
map~$\ev$. It says hat non-standard disc can never reach $b$-levels
with $|b|>b_0$. This is also relevant for compactness.

\begin{lem}
\label{lem:standard2}
Let $u=(a,f)\in\WW$. On the closed set $A:=
f^{-1}(\wR\setminus\Int(\wB))\subset\D$,
which includes the whole boundary $\partial\D$ in its
interior, we write $f=(b,\bfh)$. If the function $b$ takes values
outside $[-b_0,b_0]$,
then $f$ maps to a $b$-level set $\{b_1\}\times D\times\C^{n-1}$
with $|b_1|>b_0$ and hence, by the preceding lemma, the holomorphic curve
$u$ equals $u_{\bfs,b_1}^{\bft}$ for some $\bfs$ and~$\bft$.
\end{lem}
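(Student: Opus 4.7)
The strategy is to show that, assuming some value of $b$ exceeds $b_0$ (the case of a value below $-b_0$ being symmetric), the function $b$ must be constant equal to some $b_1>b_0$ on all of $\D$, after which Lemma~\ref{lem:standard1} immediately yields $u=u^{\bft}_{\bfs,b_1}$. The main tool is the strong maximum principle, supplemented by a key computation that the radial derivative $\partial_r b$ vanishes identically on $\partial\D$, which defeats Hopf's boundary lemma at boundary maxima.

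Let $b^*:=\max_{\D}b$, so $b^*>b_0$ by hypothesis, and let $z^*\in\D$ be a point where the maximum is attained. At any point where $f$ maps into $\wR\setminus\wB$---in particular on the open set $\{b>b_0\}\subset A$ and on a neighbourhood of $\partial\D$---Proposition~\ref{prop:biholo} produces a decomposition $f=(b,\bfh)$ with $\bfh$ holomorphic and $g:=(a-\psi\circ\bfh)+\rmi b$ holomorphic, so $b$ is harmonic. By the Cauchy--Riemann equations in polar coordinates,
\[
\partial_r b = -\frac{1}{r}\,\partial_\theta(a-\psi\circ\bfh).
\]
I would next check that this vanishes on $\partial\D$: the Lagrangian condition $u(\partial\D)\subset L^{\bft}\subset\{0\}\times\wR$ gives $a\equiv 0$, while $|h_0|=1$ and $\im h_j=t_j$ on $\partial\D$ imply that $\psi\circ\bfh=\tfrac14+\tfrac12|\bft|^2$ is constant along $\partial\D$. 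Hence $\partial_\theta(a-\psi\circ\bfh)\equiv 0$ on $\partial\D$, and therefore $\partial_r b\equiv 0$ there.

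Now let $V_0$ denote the connected component of $\{b>b_0\}$ containing $z^*$; on $V_0$ the function $b$ is harmonic. If $z^*\in\Int(\D)$, the strong maximum principle forces $b\equiv b^*$ on $V_0$. If $z^*\in\partial\D$, Hopf's boundary lemma applied to a small half-ball in $V_0$ at $z^*$ would require $\partial_r b(z^*)>0$ unless $b$ is locally constant at $z^*$; the vanishing $\partial_r b\equiv 0$ on $\partial\D$ rules out the strict inequality, and the strong maximum principle again yields $b\equiv b^*$ on $V_0$. The topological boundary of $V_0$ in $\D$ would consist of limit points at which either $b=b_0$ (approach from inside $A\setminus V_0$) or $f$ hits $\partial\wB$ (approach from outside $A$); in either case the $b$-coordinate lies in $[-b_0,b_0]$ by continuity, contradicting $b\equiv b^*>b_0$ on the closure of $V_0$ in $A$. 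Hence this boundary is empty, so $V_0=\D$ and $b\equiv b_1:=b^*>b_0$ on all of $\D$.

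Combining $b\equiv b_1$ with Lemma~\ref{lem:bounds}(ii) gives $f(\D)\subset\{b_1\}\times D\times\C^{n-1}\subset\wR\setminus\Int(\wB)$, and Lemma~\ref{lem:standard1} identifies $u$ with a standard disc $u^{\bft}_{\bfs,b_1}$; the case of a value below $-b_0$ is treated by the symmetric argument using the strong minimum principle. I expect the main obstacle to be the boundary-maximum case, which is precisely defeated by the identity $\partial_r b\equiv 0$ on $\partial\D$---an identity that relies crucially on the specific forms of both $L^{\bft}$ and $\psi$.
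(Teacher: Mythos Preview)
Your proof is correct and follows the same overall strategy as the paper: exploit the holomorphicity of $g=(a-\psi\circ\bfh)+\rmi b$ on $\Int(A)$, apply a maximum-principle argument to the imaginary part $b$, and conclude via an open--closed argument that $b$ is globally constant. The only substantive difference is in the treatment of a boundary maximum $z^*\in\partial\D$. The paper observes that $\re g$ takes the constant value $-1/4-|\bft|^2/2$ on $\partial\D$, performs Schwarz reflection of $g$ across $\partial\D$, and then applies the ordinary strong maximum principle at the now-interior point~$z^*$. You instead translate the same constancy of $\re g$ along $\partial\D$ into the Cauchy--Riemann identity $\partial_r b=-\partial_\theta(\re g)\equiv 0$ on $\partial\D$ and invoke Hopf's boundary lemma. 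These are two standard and essentially interchangeable devices for the same situation---your vanishing of $\partial_r b$ is precisely what makes the reflection possible---so neither approach buys materially more than the other.

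One small remark: your appeal to the strong maximum principle to obtain $b\equiv b^*$ on \emph{all} of $V_0$ implicitly requires re-applying the Hopf argument at every point of $V_0\cap\partial\D$, not just at~$z^*$ (since $V_0\cap\Int(\D)$ need not be connected). Since you have established $\partial_r b\equiv 0$ on the whole of $\partial\D$ this is unproblematic, but it would be cleaner to phrase the argument directly as: the set $\{z\in A: b(z)=b^*\}$ is closed in $\D$ by continuity and open in $\D$ by the interior/boundary dichotomy you describe, hence equals~$\D$.
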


\begin{proof}
Choose $z_*\in A$ with $b_*:=b(z_*)$ of maximal absolute value.
Notice that $z_*$ is an interior point of~$A$.
By Proposition~\ref{prop:biholo}, the function
\[ g:= a-\frac{1}{4}|h_0|^2-\frac{1}{2}\sum_{j=1}^{n-1}(\im h_j)^2+\rmi b\]
is holomorphic on~$\Int(A)$. We should now like to argue with the maximum
principle that the imaginary part $b$ of $g$ has to be constant equal to
$b_*$ on an open and closed
subset of~$\D$. If $z_*\in\Int(\D)$, this inference is indeed
conclusive, just as in part (ii) of Lemma~\ref{lem:bounds}.
If $z_*\in\partial\D$, we reason as follows.

The real part of the holomorphic function $g$
takes the constant value $a_{\partial}:=-1/4-|\bft|^2/2$ on
$\partial\D\subset\Int(A)$.
It follows that the function can be extended by Schwarz reflection
to the complementary set $\overline{A}$
of $A$ in $\hat{\C}\setminus\D$, with $\hat{\C}$
denoting the Riemann sphere. Indeed, the holomorphic function
$\rmi (g-a_{\partial})$ takes real values on $\partial\D$,
so the Schwarz reflection principle applies to this function, and
we simply transform the extension via the map $w\mapsto -\rmi w+a_{\partial}$
to a holomorphic extension of~$g$. Now $z_*$ is an interior point
of $A\cup\overline{A}$, and we conclude as before with the
maximum principle.
\end{proof}

Finally, we establish a $C^0$-bound in the
$\C^{n-1}$-direction on non-standard discs.

\begin{lem}
\label{lem:standard3}
Let $u=(a,f)\in\WW$. If $f(\D)$ intersects
\[ \R\times\C\times \bigl(\C^{n-1}\setminus D^{2n-2}_{R+\sqrt{2}}\bigr),\]
then $u$ equals one of the standard discs~$\utsb$.
\end{lem}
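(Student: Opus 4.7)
I would argue by contradiction, following the template established in the proofs of Lemmas~\ref{lem:standard1} and~\ref{lem:standard2}. Suppose $u=(a,f)\in\WW$ is not equal to any standard disc $\utsb$, but that nevertheless there exists $z_*\in\D$ with $|\bfh(z_*)|>R+\sqrt{2}$, where $\bfh$ denotes the $\C^{n-1}$-component of $f$ where defined. By Lemma~\ref{lem:standard1}, $f$ must enter $\Int(\wB)$, and hence
\[ A\;:=\;f^{-1}\bigl(\wR\setminus\Int(\wB)\bigr) \]
is a proper compact subset of $\D$ containing $\partial\D$. On $A$ the map $f$ has well-defined components $(b,h_0,h_1,\ldots,h_{n-1})$ with $h_0,\ldots,h_{n-1}$ holomorphic, and the boundary conditions give $\im h_j=t_j$ on $\partial\D$ and $|\bfh|\le R$ on the inner boundary $\partial A\cap\Int(\D)$ (where $f$ takes values on $\partial\wB$). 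In addition, the maximum principle applied to the subharmonic function $|h_0|^2$ gives $|h_0|\le 1$ on $A$, with $|h_0|=1$ only on $\partial\D$.

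The key technical step is a double Schwarz reflection through the unit circle $\partial\D$. First, since $\bfh-\rmi\bft$ takes real (vector) values on $\partial\D$, it extends holomorphically to $\tilde\bfh\co\hat A\to\C^{n-1}$, where $\hat A:=A\cup\overline A$ and $\overline A=\{z\in\hat\C:1/\bar z\in A\}$ is the reflection of $A$. Secondly, by Proposition~\ref{prop:biholo} the function
\[ F\;:=\;a-\psi\circ f+\rmi b \]
is holomorphic on $A$ with constant real part $c:=-\tfrac{1}{4}-\tfrac{1}{2}|\bft|^2$ on $\partial\D$, so $F$ likewise extends by Schwarz reflection to a holomorphic $\tilde F$ on $\hat A$, satisfying the symmetry $\re\tilde F(z)+\re\tilde F(1/\bar z)=2c$.

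The next step is to combine these two extensions via the maximum principle on $\hat A$. Using Lemma~\ref{lem:bounds}(i), i.e.\ $a\le 0$ on $\D$, together with $\psi\circ f=\tfrac{1}{4}|h_0|^2+\tfrac{1}{2}|\im\bfh|^2$ and the boundary identity $\psi\circ f=\tfrac{1}{4}+\tfrac{1}{2}|\bft|^2$ on $\partial\D$, one extracts a bound on $\psi\circ f(z_*)$ from the harmonic function $\re\tilde F$ whose boundary values on $\partial\hat A$ are controlled by $\psi_{\max}=\tfrac{1}{4}r^2+\tfrac{1}{2}R^2$ on the inner/outer boundary and by $c$ on $\partial\D$. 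This produces a bound of the form $|\im\bfh(z_*)|^2\le \tfrac{1}{2}+|\bft|^2$. Coupling this with the coordinate-wise polynomial-hull estimate $|h_j|\le R$ on $A$---which follows from $h_j(\partial A)\subset\{\im=t_j\}\cup\{|w|\le R\}$ (a set whose polynomial hull in $\C$ equals itself) together with the identity-theorem/boundary argument used in Lemma~\ref{lem:standard2}---and using $(\re h_j)^2\le R^2-t_j^2$ on $\partial\D$, one assembles the sharp bound $|\bfh(z_*)|\le R+\sqrt{2}$, contradicting the hypothesis and forcing $u=\utsb$ for some $\bfs,b$ and the boundary level $\bft$.

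The main obstacle is extracting precisely the constant $\sqrt{2}$. A naive Schwarz reflection of $\bfh-\rmi\bft$ alone yields only the weaker estimate $|\bfh(z_*)|\le R+2|\bft|$, and there is no a priori control on~$|\bft|$. The refinement to $R+\sqrt{2}$ requires carefully combining the Schwarz extension of $\bfh$ with that of~$F$, exploiting the rigidity from Proposition~\ref{prop:biholo}, the boundary conditions $|h_0|=1$ and $a=0$ on $\partial\D$, and the explicit factor $\tfrac{1}{2}$ in the $|\im\bfh|^2$-term of the plurisubharmonic potential $\psi=\tfrac{1}{4}|z_0|^2+\tfrac{1}{2}\sum y_j^2$; these together eliminate the $|\bft|$-dependence and produce the constant $\sqrt{2}$.
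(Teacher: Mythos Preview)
Your approach has a genuine gap: the maximum-principle and polynomial-hull estimates you invoke cannot hold, because the Lagrangian boundary condition leaves $\re h_j$ completely free on $\partial\D$. On $L^{\bft}$ one has only $\im h_j=t_j$, so $h_j(\partial\D)$ lies on an unbounded line in~$\C$, and the subharmonic function $|h_j|^2$ (or $|\bfh|^2$) has no useful boundary control there. In particular the claimed bound ``$|h_j|\le R$ on $A$'' is false in general, and so is ``$(\re h_j)^2\le R^2-t_j^2$ on $\partial\D$''. The Schwarz extension of $F=a-\psi\circ f+\rmi b$ does not help either: on the inner boundary $\partial A\cap\Int(\D)$ one has $\re F=a-\psi\circ f$ with no uniform lower bound on~$a$, so the harmonic function $\re\tilde F$ is not two-sidedly controlled on $\partial\hat A$ and the min/max principle yields nothing quantitative. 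No amount of combining these two reflections will manufacture the constant~$\sqrt{2}$, because that constant does not come from potential theory at all.

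The paper's argument uses a different mechanism entirely: the energy bound $\int_{\D}u^*\omega=\pi$ together with the monotonicity lemma for holomorphic curves. One works on the open set $G=f^{-1}\bigl(\R\times\C\times(\C^{n-1}\setminus D^{2n-2}_R)\bigr)$, where $\bfh=(h_1,\ldots,h_{n-1})$ is defined and holomorphic. The Schwarz reflection of each $h_j-\rmi t_j$ across $\partial\D$ doubles $\oG$ to a compact surface~$S$; the point of the reflection is only to arrange that $\bfh(\partial S)$ avoids a $\delta$-ball $B_\delta$ about the extremal value $\bfh(s_0)$ (where $|\bfh(s_0)|=R+\delta$). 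Monotonicity then gives $\mathrm{area}\bigl(\bfh(S)\cap\overline{B}_\delta\bigr)\ge\pi\delta^2$, hence $\mathrm{area}\bigl(\bfh(\oG)\cap\overline{B}_\delta\bigr)\ge\pi\delta^2/2$, and comparison with the energy $\pi$ forces $\delta\le\sqrt{2}$. That is the origin of the constant; it is an area--energy inequality, not a maximum-principle bound.
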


\begin{proof}
Consider the open subset
\[ G:= f^{-1}\bigl(\R\times\C\times\bigl(\C^{n-1}\setminus D^{2n-2}_R
\bigr)\bigr)\subset\D,\]
which will be non-empty under the assumption on $f$ in the lemma.
On the closure $\oG$ of $G$ we write $f=(b,\bfh)$ as before and consider
the subharmonic function $h:=|h_1|^2+\cdots+|h_{n-1}|^2$.

Write $\partial G$ for the topological boundary of $G$ in~$\D$.
We have $h|_{\partial G}\equiv R^2$, so the maximum of $h$
on $\oG$ must be attained at a point in $G\cap\partial\D$ (in particular,
this intersection must be non-empty).

If $\oG=\D$, we are done by Lemma~\ref{lem:standard1}.
Otherwise, we perform Schwarz reflection on the holomorphic function
$h_j-\rmi t_j$, which is possible since $\im h_j\equiv t_j$ on
$\partial\D$. To the extended function
we add $\rmi t_j$ again to obtain the extension of~$h_j$.
Geometrically, this corresponds to a reflection of $h_j(G)$
in the line $\{z_j=\rmi t_j\}\subset\C$.

Write $S$ for the compact subset of the Riemann sphere given as the union
of $\oG$ and its reflected copy, and continue to write
$h$ for the extension of the plurisubharmonic function to~$S$.
Beware that $h$ may take larger values on $S$ than on $\oG$.

Choose a point $s_0\in G\cap\partial D$ where $h|_{\oG}$
attains its maximum $(R+\delta)^2$. Now consider an
open $\delta$-ball $B_{\delta}$ about the point $\bfh(s_0)\in\C^{n-1}$.
Then $\bfh(\partial G)$ is contained in the complement
of~$B_{\delta}$, and since the extension of $\bfh$ to $S$
was obtained by Schwarz reflection along $\partial\D\ni s_0$,
the full boundary $\bfh(\partial S)$ after reflection
will likewise be contained in the complement of~$B_{\delta}$.

This allows us to apply the monotonicity lemma~\cite[Theorem~1.3]{humm97},
which tells us that the area of $\bfh(S)\cap\overline{B}_{\delta}$
is bounded from below by $\pi\delta^2$. (In \cite{humm97}
the estimate is given in the form $\text{const.}\cdot\delta^2$;
in the present Euclidean setting the constant $\pi$ comes
from the classical isoperimetric inequality.)
So the area of $\bfh(\oG)\cap\overline{B}_{\delta}$ is bounded
from below by $\pi\delta^2/2$, and from above by the energy $\pi$ of~$u$.
This implies $\delta\leq\sqrt{2}$.

To sum up: Any holomorphic disc $u$ whose $\bfh$-component
stays outside $D^{2n-2}_R$ is standard; for all other discs
the $\bfh$-component stays inside $D^{2n-2}_{R+\sqrt{2}}$.
\end{proof}
\section{Compactness}
\label{section:compactness}
In this section we establish, under the assumption $\inf_0(\alpha)>\pi$,
compactness of the \emph{truncated} moduli space
\[ \WW':=\bigl\{ u=(a,f)\in\WW\co f(\D)\subset
[-b_0,b_0]\times D\times D^{2n-2}_{R+\sqrt{2}}\bigr\},\]
i.e.\ the space obtained from $\WW$ by cutting off ends
containing standard discs only.
\subsection{Variable boundary condition}
\label{subsection:variable}
The holomorphic discs $u\in\WW$ have boundary on
the Lagrangian cylinder $L^{\bft}$, which varies with the
parameter $\bft\in\R^{n-1}$.
It is possible to fix the boundary condition, at the cost of allowing the
almost complex structure to vary. This is done with the
help of a flow that enables us to identify different
copies of $L^{\bft}$. That flow will also provide
explicit charts when we discuss transversality.

Start with a constant vector field $\bfv$ on the space $\im\C^{n-1}$
of $\bft$-coordinates, and regard this as a vector field
on $\R\times\R\times\C\times\C^{n-1}$. Cut this off with a bump function
supported near
\[ \{0\}\times [-b_0,b_0]\times S^1\times\C^{n-1}\]
and identically $1$ in a smaller neighbourhood of that set.
Then write $\psi_t^{\bfv}$ for the flow of this vector field.

For a sequence $u_{\nu}$ of holomorphic discs of level
$\bft_{\nu}\rightarrow \bft_0$, we can then use the maps
$\psi_1^{\bft_{\nu}-\bft_0}$ to pull back the $u_{\nu}$ to
$J_{\nu}$-holomorphic discs of level $\bft_0$, where
$J_{\nu}:=(\psi_1^{\bft_{\nu}-\bft_0})^*J$ is $C^{\infty}$-convergent to $J$
and coincides with $J$ outside the neighbourhood described in the
preceding paragraph.
\subsection{Proof of compactness}
Now we apply this construction to the truncated moduli space $\WW'$.
Consider a sequence $(u_{\nu})$ of
holomorphic discs $u_{\nu}=(a_{\nu},f_{\nu})\in\WW'$.
Then, in particular the levels $\bft_{\nu}$ will be contained in
the compact set $D^{n-1}_{R+\sqrt{2}}$. Hence,
after passing to a subsequence, we may assume that $\bft_{\nu}\rightarrow
\bft_0$ for some $\bft_0\in D^{n-1}_{R+\sqrt{2}}$.
With the construction from the
preceding section we may take the $u_{\nu}$ to be
$J_{\nu}$-holomorphic discs of fixed boundary level~$\bft_0$.
The almost complex structures $J_{\nu}$ equal $J$ outside
a neighbourhood of $\{0\}\times [-b_0,b_0]\times S^1\times\C^{n-1}$
and converge to~$J$ in the $C^{\infty}$-topology.
By Lemma~\ref{lem:energy}, all discs $u_{\nu}$ have symplectic energy
equal to~$\pi$.

We claim that there  is a uniform bound on  $\max_{\D}|\nabla u_{\nu}|$.
Here $|\,.\,|$ denotes the norm
corresponding to an $\R$-invariant metric on $W$ of the form
$\rmd a^2+g_{\wR}$, with $g_{\wR}$ any Riemannian metric on~$\wR$.
The mean value theorem then gives a uniform $C^0$-bound
on $(a_{\nu})$, and compactness follows as in \cite{geze13} with
\cite[Theorem~B.4.2]{mcsa04}.

Bubbling off analysis as in \cite[Section~6]{geze13} shows that,
\emph{a priori}, the following phenomena might occur:
\begin{itemize}
\item[-] bubbling of spheres
\item[-] bubbling of finite energy planes
\item[-] breaking
\item[-] bubbling of discs (this can only happen at boundary points).
\end{itemize}
The first is impossible in an exact symplectic manifold. The second
and third phenomenon are precluded by the assumption
$\inf_0(\alpha)>\pi$ and the energy estimate from Lemma~\ref{lem:energy},
cf.~\cite[p.~584]{geze13}, since a finite energy plane in
a symplectisation is asymptotic to a \emph{contractible} Reeb orbit.
Notice that this rules out any kind of bubbling
at interior points.

This leaves the bubbling of discs at boundary points.
By Remark~\ref{rem:energy}, there could be at best a single
bubble disc at the boundary, taking away the full energy~$\pi$,
cf.~\cite[Theorem~4.6.1]{mcsa04}.
But the $C^{\infty}_{\loc}$ convergence on the complement of the
bubble point, together with condition~(M2), is
incompatible with a ghost disc.
\section{Transversality}
\label{section:transversality}
The purpose of this section is to show that the truncated moduli
space $\WW'$ is a smooth, oriented manifold with boundary.
As usual, this is achieved by proving transversality results in
the setting of $W^{1,p}$-maps for some $p>2$. Smoothness
of the holomorphic discs is then implied by elliptic regularity.

Let $\BB$ denote the space of $W^{1,p}$-maps
\[ u\co (\D,\partial\D)\longrightarrow (W,\{0\}\times\wR),\]
where $u(\partial\D)$ is supposed to be contained in $L^{\bft}$
for some $\bft\in\R^{n-1}$, and $u$ is
required to satisfy the homological condition (M1) from
Section~\ref{subsection:moduli}. Write $\BB^{\bft}\subset\BB$
for the subspace of discs corresponding to a fixed
boundary level~$\bft$.

The space $\BB^{\bft}$ is a (separable) Banach manifold 
modelled on the Banach space of $W^{1,p}$-sections
of $u^*(TW,TL^{\bft})$ (i.e.\ vector fields along $u$ that are
tangent to $L^{\bft}$ along the boundary); charts are obtained from
such vector fields along $u$ by choosing
a metric for which the submanifold $L^{\bft}$ is totally
geodesic and then applying the exponential map,
see~\cite{elia67}.
The construction from Section~\ref{subsection:variable}
shows that the map sending a disc $u\in\BB$ to its level $\bft$
gives $\BB$ the structure of a locally trivial fibration
over $\R^{n-1}$ with fibre $\BB^{\bft}$.
Tangent vectors at $u\in\BB$ can be written uniquely
as $\fraku+\frakv|_u$,
where $\fraku\in T_u\BB^{\bft}$, and $\frakv$
is a vector field as in Section~\ref{subsection:variable}
coming from a constant vector field $\ofrakv$ on
$\im\C^{n-1}$.
\subsection{The linearised Cauchy--Riemann operator}
Over $\BB$ we have a Banach space bundle $\EE$ whose
fibre over the point $u\in\BB$ is the space
$L^p(u^*TW)$ of $L^p$-vector fields along~$u$; see
for instance \cite[Proposition~6.13]{abklr94} for the construction
of the bundle structure. This bundle inherits the local product structure
from~$\BB$.

Fix an almost complex structure $J$ on $W$ satisfying
the conditions (J1) and (J2).
The Cauchy--Riemann operator $u\mapsto u_x+J(u)u_y$
defines a section of~$\EE$. In order to discuss transversality,
we need to compute the vertical differential $D_u$ of this section
at $u\in\BB$. To this end, consider a path of holomorphic curves
\[ u^s:=\psi_1^{s\frakv}\circ\exp_u(s\fraku)\]
for $s$ in some small interval around~$0$, where $\psi$ denotes the
flow as in Section~\ref{subsection:variable}. This path is
tangent to $\fraku+\frakv|_u$ in $s=0$. Let $\nabla$ be
a torsion-free connection on $TW$. Write
\[ \nabla_s=\bigl(\nabla_{\partial u^s/\partial s}\bigr)|_{s=0},\;\;\;
\nabla_x=\bigl(\nabla_{\partial u^s/\partial x}\bigr)|_{s=0},\]
and likewise $\nabla_y$. Since the torsion of $\nabla$ vanishes,
we have
\[ \nabla_s\frac{\partial u^s}{\partial x}=
\nabla_x\frac{\partial u^s}{\partial s}=\nabla_x(\fraku+\frakv),\]
and similarly for $\partial u^s/\partial y$.
Hence
\begin{eqnarray*}
D_u(\fraku+\frakv|_u) & = & \nabla_s(u^s_x+J(u^s)u^s_y)\\
  & = & \nabla_x(\fraku+\frakv)+J(u)\nabla_y(\fraku+\frakv)
        +\bigl(\nabla_{\fraku+\frakv}J\bigr)(u)\, u_y\\
  & = & D_u^{\bft}\fraku+K_u\ofrakv,
\end{eqnarray*}
where
\begin{eqnarray*}
D_u^{\bft}\fraku & := & \nabla_x\fraku+J(u)\nabla_y\fraku+
                        \bigl(\nabla_{\fraku}J\bigr)(u)u_y,\\
K_u\ofrakv       & := & \nabla_x\frakv+J(u)\nabla_y\frakv+
                        \bigl(\nabla_{\frakv}J\bigr)(u)u_y.
\end{eqnarray*}
The operator $\ofrakv\mapsto K_u\ofrakv$ is linear of order $0$
in $\ofrakv$, and hence a compact operator. The restriction
of $D_u$ to the subspace $T_u\BB^{\bft}$ equals $D_u^{\bft}$,
which is a Fredholm operator of index
\[ \Index(D_u^{\bft})=\mu+n+1=n+3\]
by the index formula \cite[Theorem~C.1.10]{mcsa04}
and Lemma~\ref{lem:Maslov}. The subspace $\frakV\subset T_u\BB$
made up of vectors of the form $\frakv|_{u}$ is
$(n-1)$-dimensional, and it is contained in the kernel of $D_u^{\bft}$.
Hence, by the invariance under compact
perturbations of both the Fredholm property and the
index, see \cite[Theorem~A.1.4]{mcsa04}, we have ---
writing $\OO$ for the zero operator ---
\[ \Index(D_u)=\Index(D_u^{\bft}+\OO_{\frakV})=
\Index(D_u^{\bft})+n-1=2n+2.\]
\subsection{Regular almost complex structures}
\label{subsection:regular}
Given an almost complex structure $J$ on $W$ subject to the
constraints (J1) and (J2), write $\wWW$ for the space
of \emph{holomorphic} discs $u\in\BB$, i.e.\
those $u$ with $u_x+J(u)u_y=0$. In other words, these
are holomorphic discs satisfying condition (M1).

The almost complex structure $J$ is called \emph{regular} if
two conditions are satisfied:
\begin{itemize}
\item[(i)] $D_u$ is onto for all $u\in\wWW$.
\item[(ii)] The evaluation map
\[ \begin{array}{ccc}
\wWW       & \longrightarrow & L^{\bft}\times L^{\bft}\times L^{\bft}\\
u=(a,f)    & \longmapsto     & (f(1),f(\rmi),f(-1))
\end{array}\]
is transverse to $(\R\times L^{\bft}_1)\times(\R\times L^{\bft}_{\rmi})
\times(\R\times L^{\bft}_{-1})$, where $L^{\bft}_{e^{\rmi\theta}}:=
L^{\bft}\cap\{z_0=e^{\rmi\theta}\}$.
\end{itemize}
If the first condition is satisfied, $\wWW$ will
be a manifold of the expected dimension $2n+2$; if in addition (ii)
holds, then $\WW$ will be a manifold of dimension $2n-1$.

The proof that the set of regular $J$ is non-empty, in fact of second
Baire category, follows the standard line of reasoning as
in the proof of Theorems~3.1.5 and 3.4.1 of~\cite{mcsa04}.
Selecting such a regular $J$ is the generic choice we make
in~(J2). For the standard discs $u^{\bft}_{\bfs,b}$,
transversality is obvious. By Lemma \ref{lem:standard1},
all discs that are not standard
pass through the region where $J$ may be chosen generically,
which is sufficient to achieve transversality
by \cite[Remark~3.2.3]{mcsa04}. In contrast with the
set-up in~\cite{mcsa04}, we are
only allowed to perturb $J$ along $\xi$, keeping it compatible
with~$\rmd\halp$. But this is exactly the situation dealt with
by Bourgeois in the appendix of~\cite{bour06}. The proof given there
carries over to our situation; the essential ingredient of
Bourgeois's argument is that the set of $f$-injective points
is open and dense, which is precisely our Lemma~\ref{lem:f-injective}.
\subsection{Orientation}
In order to speak of the degree of the evaluation map
$\ev$ on $\WW\times\D$, we need to put an orientation on the
moduli space~$\WW$. Given the relation between $\WW$ and
$\wWW$ described in the preceding section, it suffices to
orient~$\wWW$, and that in turn amounts to showing
that the determinant line bundle $\det D$ over $\wWW$ is oriented,
since $\ker D_u=T_u\wWW$.

Recall that the determinant line $\det F$ is defined for any
Fredholm operator $F$ as $\det F=\det\ker F\otimes
(\det\coker F)^*$. Since $D_u$ is surjective for all $u\in\wWW$,
the determinant line bundle is simply $\det\ker D=\bigwedge^{2n+2}\ker D$.
In the arguments that follow, however, we use deformations
through not necessarily surjective Fredholm operators,
so we need to work with determinant lines, in general.

As we have seen, the operator $D_u$ splits (by slight abuse of notation)
as $D_u=D_u^{\bft}+K_u$. The linear interpolation of $D_u$
to $D_u^{\bft}+\OO_{\frakV}$ is via Fredholm operators, since $K_u$ is compact.
It follows that $\det(D_u)=\det(D_u^{\bft}+\OO_{\frakV})$,
see~\cite[p.~680]{fooo09}, whence
\[ \det(D_u)=\det D_u^{\bft}\otimes\det\frakV.\]
The second factor inherits a natural orientation from the
orientation of $\R^{n-1}$. The first factor is naturally
oriented by the construction in \cite[Section~8.1]{fooo09}.
Our situation is a particularly simple one, since $TL^{\bft}$
is a trivial bundle. This implies that any bundle pair
$(u^*TW,(u|_{\partial\D})^*TL^{\bft})$ comes with
a natural trivialisation of the boundary bundle, and this suffices
for the construction of a natural orientation of the determinant line
bundle.
\section{Proof of Theorem~\ref{thm:main}}
By Sections \ref{section:compactness} and~\ref{section:transversality}
(notably Lemma~\ref{lem:standard2}),
the assumption $\inf_0(\alpha)>\pi$
of Theorem~\ref{thm:main} implies that the evaluation map
\[ \begin{array}{rccc}
\ev\co & \WW\times\D          & \longrightarrow & \wZ\\
       & \bigl( (a,f),z\bigr) & \longmapsto     & f(z)
\end{array} \]
is a proper map of degree~$1$. By Lemmata \ref{lem:standard1}
and~\ref{lem:standard3}, we may pretend that
$\WW\times\D$ and $\wZ$ are --- after smoothing corners ---
compact, oriented manifolds with boundary, without changing the homotopy type
of these spaces, and that $\ev$ is
a smooth degree~$1$ map between these manifolds.

Homotopical and homological arguments similar to the ones that follow
were used by Eliashberg--Floer--McDuff, see~\cite{mcdu91}.

\begin{prop}
\label{prop:pi1}
The manifold $\wZ$ is simply connected.
\end{prop}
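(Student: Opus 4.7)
The plan is to derive Proposition~\ref{prop:pi1} from the key facts about the evaluation map $\ev\colon \WW \times \D \to \wZ$ established in the preceding sections --- namely that, after truncation via Lemmata~\ref{lem:standard1} and~\ref{lem:standard3}, $\ev$ is a proper, surjective, degree-$1$ map between compact, connected, oriented manifolds (with boundary, after smoothing corners). Simple connectivity of $\wZ$ will then be deduced via the standard two-step route: (a) $\ev_*\colon \pi_1(\WW \times \D) \to \pi_1(\wZ)$ is surjective, and (b) $\pi_1(\WW \times \D) = 1$.

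For (a), I would invoke the classical argument that a proper degree-$1$ map between connected oriented manifolds induces a surjection on fundamental groups. Namely, if $H := \ev_*\pi_1(\WW\times\D)$ were a proper subgroup of $\pi_1(\wZ)$, then $\ev$ would lift through the covering $p\colon \tilde Y \to \wZ$ associated to $H$, giving $\ev = p \circ \tilde{\ev}$, and hence $1 = \deg \ev = \deg p \cdot \deg \tilde{\ev}$. If $\deg p$ is finite and greater than $1$, this is a numerical contradiction; if $\deg p$ is infinite, properness of $\ev$ forces $\tilde{\ev}$ to be proper as well, hence of well-defined degree, which again yields a contradiction.

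For (b), since $\D$ is contractible, the task reduces to $\pi_1(\WW) = 1$. Here I would use the boundary-level map $\WW \to \R^{n-1}$, $u \mapsto \bft$. The construction in Section~\ref{subsection:variable} (cutting off a constant vector field and flowing) gives local trivialisations that exhibit this map as a locally trivial smooth fibration with base $\R^{n-1}$, whose fibre $\WW^{\bft}$ consists of holomorphic discs with fixed Lagrangian boundary $L^{\bft}$. The standard discs $\utsb$ of Section~\ref{subsection:cylinder} give a subfamily of $\WW^{\bft}$ parametrised by $(\bfs,b) \in \R^{n-1} \times \R$. By Lemma~\ref{lem:standard1} every disc not meeting the perturbation region $\R\times\Int(\wB)$ is standard, and by Lemma~\ref{lem:standard3} the non-standard discs are confined to a compact set of parameters; this should yield that $\WW^{\bft}$ is connected and simply connected (one expects a deformation retraction onto an $\R^n$ of standard discs). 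The long exact homotopy sequence of the fibration then gives $\pi_1(\WW)=1$, since both the base $\R^{n-1}$ and the fibre are simply connected.

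The hard part is verifying simple connectivity of a single fibre $\WW^{\bft}$, since this is an assertion about the global structure of the moduli space of holomorphic discs in the non-standard region. The analytic inputs are all in place --- compactness from Section~\ref{section:compactness}, transversality and orientability from Section~\ref{section:transversality}, and the geometric constraints from Lemmata~\ref{lem:standard1}--\ref{lem:standard3} --- so the actual argument is topological: one needs to promote the global structure ``standard at infinity, compact core'' to a statement that $\WW^{\bft}$ is homotopy equivalent to a Euclidean space. I expect this to be the pivotal step, handled either by a direct retraction onto the standard part or by a second degree-$1$ argument applied to a suitable slicewise evaluation.
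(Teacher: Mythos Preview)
Your step~(a) is fine, but step~(b) has a genuine gap, and the paper's proof avoids it entirely.

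The problem is your claim that $\WW^{\bft}$ is simply connected. Knowing that an $n$-manifold coincides with $\R^n$ outside a compact set does \emph{not} give a deformation retraction onto that $\R^n$; for $n\geq 2$ one can connect-sum $\R^n$ with $S^1\times S^{n-1}$ and obtain a manifold that is standard at infinity yet has nontrivial $\pi_1$. Nothing in Lemmata~\ref{lem:standard1}--\ref{lem:standard3} or in the compactness/transversality package controls $\pi_1$ of the moduli space, so the ``expected retraction'' is unsupported. (There is also a smaller issue: the flow of Section~\ref{subsection:variable} trivialises $\BB$, not $\WW$, because it changes $J$; so the level map $\WW\to\R^{n-1}$ is not obviously a fibration.) In effect you have replaced the unknown $\pi_1(\wZ)$ by the equally unknown $\pi_1(\WW)$.

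The paper sidesteps this by never computing $\pi_1(\WW)$. Given a loop $C\subset\wZ$, homotope it so that it meets the region where all discs are standard, and make $\ev$ transverse to $C$ by a perturbation supported in $\Int(\wB)$. Over the standard region $\ev$ is a diffeomorphism, so $\ev^{-1}(C)$ contains a single circle $C'$ mapping with degree~$1$ onto~$C$. Now use the extra structure of the domain: $\D$ is contractible, so $C'$ is homotopic in $\WW\times\D$ to a loop in $\WW\times\{1\}$, and by condition~(M2) the map $\ev$ sends $\WW\times\{1\}$ into the cell $\R\times\{1\}\times\C^{n-1}\subset\wZ$. Hence $C$ is null-homotopic. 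The key input you are missing is precisely this: $\ev$ collapses the slice $\WW\times\{1\}$ into a contractible subset of $\wZ$, which makes knowledge of $\pi_1(\WW)$ unnecessary.
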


\begin{proof}
Given a loop in $\wZ$, we homotope it to an embedded circle $C$
inside $\Int (\wZ)$ that intersects the complement of~$\wB$,
in other words, such that it passes through the region where
all holomorphic discs (more precisely, their $f$-components)
are standard. We can make the evaluation map
\[ \begin{array}{ccc}
\WW\times\D          & \longrightarrow & \wZ\\
\bigl( (a,f),z\bigr) & \longmapsto     & f(z)
\end{array} \]
transverse to $C$ by a perturbation compactly supported
in $\Int (\wB)$. The preimage of $C$ under this perturbed map
will then be a single circle $C'\subset\WW\times\D$
mapping with degree 1 onto~$C$. The homotopy of $C'$ to
a loop in $\WW\times\{1\}$ induces a homotopy of $C$
to a loop in the cell $\R\times\{1\}\times\C^{n-1}$.
\end{proof}

\begin{lem}
Let $\phi\co (P,\partial P)\rightarrow (Q,\partial Q)$ be a degree~$1$
map between compact, oriented $m$-dimensional manifolds with boundary.
Then the induced homomorphism $\phi_*\co H_k(P;\F)\rightarrow
H_k(Q;\F)$ in singular homology with coefficients
in a field $\F$ is surjective in each degree $k\in\N_0$.
\end{lem}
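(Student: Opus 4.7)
The plan is to construct an explicit right inverse to $\phi_*$ using Poincar\'e--Lefschetz duality and the naturality of cap products. This is a classical fact about degree-one maps, and all the ingredients are standard once the setup (compact, oriented, map of pairs) is in place.

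First I would invoke Poincar\'e--Lefschetz duality: for any compact oriented $m$-manifold $N$ with boundary and any field $\F$, cap product with the fundamental class gives an isomorphism
\[ D_N\co H^{m-k}(N,\partial N;\F)\longrightarrow H_k(N;\F),\quad
   \beta\longmapsto \beta\cap [N,\partial N]. \]
Apply this both to $P$ and to $Q$.

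Now, given $x\in H_k(Q;\F)$, set $\beta:=D_Q^{-1}(x)\in H^{m-k}(Q,\partial Q;\F)$. Since $\phi$ is a map of pairs, it induces $\phi^*\co H^{m-k}(Q,\partial Q;\F)\to H^{m-k}(P,\partial P;\F)$, and I define
\[ y:=\phi^*(\beta)\cap [P,\partial P]\in H_k(P;\F). \]
The key computation is then
\[ \phi_*(y)=\phi_*\bigl(\phi^*(\beta)\cap [P,\partial P]\bigr)
   =\beta\cap\phi_*[P,\partial P]=\beta\cap[Q,\partial Q]=x, \]
where the second equality is the projection formula (naturality of cap products with respect to maps of pairs), and the third uses that $\phi$ has degree~$1$. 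This exhibits a preimage of $x$, so $\phi_*$ is surjective in each degree~$k$.

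There is no substantive obstacle here; the only thing to be careful about is that $\phi$ being a map of pairs is what makes $\phi^*$ defined on relative cohomology, which is essential to carry out the duality argument. Working over a field $\F$ ensures that $D_P$ and $D_Q$ are genuine isomorphisms without any Ext or torsion subtleties, so the argument proceeds uniformly for every degree~$k$.
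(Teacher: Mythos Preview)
Your proof is correct. Both your argument and the paper's rest on Poincar\'e--Lefschetz duality together with the degree-$1$ hypothesis, but they are packaged dually. The paper passes to cohomology via the Kronecker pairing (non-degenerate over a field) and shows $\phi^*$ is injective by exhibiting, for each nonzero $\beta\in H^k(Q)$, a cup-partner $\gamma\in H^{m-k}(Q,\partial Q)$ with $\beta\cup\gamma$ the orientation class, so that $\phi^*(\beta\cup\gamma)\neq 0$ forces $\phi^*\beta\neq 0$. You instead stay in homology and build the \emph{umkehr} map $D_P\circ\phi^*\circ D_Q^{-1}$ directly, then verify via the projection formula that it is a right inverse to~$\phi_*$. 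Your route is slightly more direct in that it avoids the detour through the Kronecker pairing and produces an explicit preimage; the paper's route makes the role of the cup product pairing a bit more visible. Either way the content is the same classical fact.
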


\begin{proof}
Over a field, the Kronecker pairing between homology and cohomology is
non-degenerate, so equivalently we need to show injectivity
of the induced homomorphism $\phi^*$ in cohomology.

Given a non-zero class $\beta\in H^k(Q)$, Poincar\'e duality allows us to
find a class $\gamma\in H^{m-k}(Q,\partial Q)$ such that $\beta\cup\gamma$
is the orientation generator of $H^m(Q,\partial Q)$. Since $\phi$ is of
degree~$1$, we have
\[ 0\neq\phi^*(\beta\cup\gamma)=\phi^*\beta\cup\phi^*\gamma,\]
which forces $\phi^*$ to be injective on $H^k(Q)$.
\end{proof}

\begin{prop}
\label{prop:H*}
The manifold $\wZ$ has the integral homology of a point.
\end{prop}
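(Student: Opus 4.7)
The plan is to establish vanishing of $H_k(\wZ;\F)$ for every field $\F$ and every $k\geq 1$, and then lift to integer coefficients by the universal coefficient theorem. Throughout, I treat $\wZ$ and $\WW\times\D$ as compact, oriented $(2n+1)$-manifolds with boundary after smoothing corners, as licensed by the discussion preceding Proposition~\ref{prop:pi1}.

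The preceding lemma applied to the proper degree-one map $\ev : \WW\times\D \to \wZ$, combined with K\"unneth and the contractibility of $\D$, gives in each degree $k$ a surjection $H_k(\WW;\F) \twoheadrightarrow H_k(\wZ;\F)$. Since $\WW$ is a compact oriented $(2n-1)$-manifold with non-empty boundary (carved out by the truncation in the standard-disc ends described by Lemmas~\ref{lem:standard1}--\ref{lem:standard3}), its homology vanishes in degrees $\geq 2n-1$, so $H_k(\wZ;\F) = 0$ for $k\geq 2n-1$.

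Next, I invoke Poincar\'e--Lefschetz duality on $\wZ$, $H_k(\wZ;\F) \cong H^{2n+1-k}(\wZ,\partial\wZ;\F)$, together with the cohomology long exact sequence of the pair and $\partial\wZ\cong S^{2n}$ (so $H^j(\partial\wZ;\F)=0$ for $0<j<2n$). The high-degree vanishing then translates into $H^j(\wZ,\partial\wZ;\F)=0$ for $j\leq 2$, and hence into $H_j(\wZ;\F)=0$ for $j=1,2$.

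The main obstacle is the middle range $3 \leq k \leq 2n-2$, which is not directly controlled by the sphere boundary of $\wZ$ or by dimensional considerations on $\WW$ alone. I expect to close this by upgrading the input on $\WW$: namely, showing that $\WW$ itself has vanishing $\F$-homology in positive degrees so that the surjection kills all $k\geq 1$. Lemmas~\ref{lem:standard1}--\ref{lem:standard3} confine the non-standard part of $\WW$ to a compact parameter region, and Reeb translation by $\partial_b$ preserves both $\halp$ and the Lagrangian cylinders $L^{\bft}$ in the cylindrical end of $\wR$; a cut-off $b$-translation should therefore deformation-retract $\WW$ onto the contractible standard family $\{\utsb\}\cong\R^{2n-1}$. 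Once $H_k(\wZ;\F)=0$ for every field $\F$ and every $k\geq 1$, the universal coefficient theorem yields $H_k(\wZ;\Z)=0$ for $k\geq 1$: vanishing over $\Q$ forces zero rank, and vanishing over each $\F_p$ excludes $p$-torsion, inductively starting from the torsion-free group $H_0(\wZ;\Z)=\Z$.
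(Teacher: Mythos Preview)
Your argument has a genuine gap in the ``middle range'' step, and the fix you propose does not work as stated.  The retraction of $\WW$ onto the standard family via ``cut-off $b$-translation'' is not well defined: the Reeb vector field $R_{\halp}$ coincides with $\partial_b$ only on $\wR\setminus\Int(\wB)$; inside the glued-in copy of $M$ it is the Reeb field of $(M,\alpha)$, whose orbits have no reason to exit~$M$.  Even the full Reeb flow on $\wR$ need not carry a non-standard disc out of~$\wB$, and in any case that flow does not preserve the generically chosen almost complex structure $J$ on $\R\times\Int(\wB)$ (condition~(J2) makes $J$ invariant in the $a$-direction, not along the Reeb flow).  Hence Reeb translation is not a map $\WW\to\WW$, and you have no retraction.

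The paper's argument bypasses the need to understand the homology of $\WW$ altogether, and this is the point you are missing.  The marked-point condition~(M2) forces $f(1)\in\R\times\{1\}\times\C^{n-1}$ for every $u=(a,f)\in\WW$, so $\ev$ restricted to $\WW\times\{1\}$ lands in a contractible cell of~$\wZ$.  Since the inclusion $\WW\times\{1\}\hookrightarrow\WW\times\D$ is a homotopy equivalence, $\ev_*$ is the zero map on $H_k(\,\cdot\,;\F)$ for all $k\geq 1$.  Combined with the surjectivity lemma, this gives $H_k(\wZ;\F)=0$ for every field $\F$ and every $k\geq 1$ in one stroke; your duality and dimension bookkeeping are then unnecessary.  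This is exactly the ``argument analogous to Proposition~\ref{prop:pi1}'' the paper invokes.
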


\begin{proof}
With the preceding lemma this follows with an argument completely analogous
to the proof of Proposition~\ref{prop:pi1}.
\end{proof}

\begin{proof}[Proof of Theorem~\ref{thm:main}]
Since $2n\neq 3$, the smooth Schoenflies theorem tells us that
the subset of $\wZ$ bounded by $\varphi(\partial M)$ and a standard
ellipsoid surrounding $\varphi(\partial M)$ is diffeomorphic to
a collar of $\partial M$. Hence $M$ is a strong deformation retract
of~$\wZ$. So by Propositions \ref{prop:pi1} and~\ref{prop:H*},
the manifold $M$ is a simply connected homology ball with boundary
diffeomorphic to~$S^{2n}$. It follows that $M$ is diffeomorphic to
a ball: for $n\geq 3$ we appeal to Proposition~A on page 108
of Milnor's lectures~\cite{miln65}; for $n=2$, to Proposition~C on
page~110.
\end{proof}
\begin{ack}
We thank Peter Albers for useful conversations about compactness
questions, and Chris Wendl for drawing our attention to Fr\'ed\'eric
Bourgeois's work on transversality in the setting of symplectisations.
A part of the work on this paper was done during the workshop on
Legendrian submanifolds, holomorphic curves and generating families at
the Acad\'emie Royale de Belgique, August 2013, organised by
Fr\'ed\'eric Bourgeois.
\end{ack}

\end{document}